\DeclareMathOperator{\tr}{tr}
\DeclareMathOperator{\End}{End}
\DeclareMathOperator{\ran}{ran}
\DeclareMathOperator{\GL}{GL}
\DeclareMathOperator{\dom}{dom}
\DeclareMathOperator{\real}{Re}
\DeclareMathOperator{\imag}{Im}
\DeclareMathOperator{\opm}{M}
\DeclareMathOperator{\oph}{H}
\newcommand{\ve}{\varepsilon}
\newcommand{\de}{\delta}
\newcommand{\N}{\mathbb{N}}
\newcommand{\R}{\mathbb{R}}
\newcommand{\C}{\mathbb{C}}
\newcommand{\cK}{\mathcal{K}}
\newcommand{\cS}{\mathcal{S}}
\newcommand{\cV}{\mathcal{V}}
\newcommand{\rr}{\mathbbm r}
\newcommand{\ulx}{\boldsymbol{x}}
\newcommand*{\mat}[1]{\opm_{#1}(\C)}
\newcommand*{\gl}[1]{\GL_{#1}(\C)}
\newcommand*{\herm}[1]{\oph_{#1}(\C)}
\newcommand{\all}{\mathbb{M}^d}
\newcommand{\allh}{\mathbb{H}^d}
\newcommand{\alld}{\mathbb{D}^d}
\newcommand{\Langle}{\mathop{<}\!}
\newcommand{\Rangle}{\!\mathop{>}}
\newcommand{\px}{\C\!\Langle \ulx\Rangle}
\newcommand{\pxx}{\C\!\Langle \ulx,\ulx^*\Rangle}
\def\moverlay{\mathpalette\mov@rlay}
\def\mov@rlay#1#2{\leavevmode\vtop{
		\baselineskip\z@skip \lineskiplimit-\maxdimen
		\ialign{\hfil$#1##$\hfil\cr#2\crcr}}}
\newcommand{\plangle}{\moverlay{(\cr<}}
\newcommand{\prangle}{\moverlay{)\cr>}}
\newcommand{\rx}{\C\plangle \ulx \prangle}
\newcommand{\sstable}{purely stable\xspace}
\newcommand{\strstable}{S-stable\xspace}
\newcommand{\irr}{indecomposable\xspace}
\newcommand{\bqed}{\hfill $\blacksquare$}
\newtheorem{thm}{Theorem}[section]
\newtheorem{lem}[thm]{Lemma}
\newtheorem{cor}[thm]{Corollary}
\newtheorem{prop}[thm]{Proposition}
\newtheorem{thmA}{Theorem}
\theoremstyle{definition}
\newtheorem{defn}[thm]{Definition}
\newtheorem{exa}[thm]{Example}
\theoremstyle{remark}
\newtheorem{rem}[thm]{Remark}
\numberwithin{equation}{section}
\begin{document}
	
\setcounter{tocdepth}{3}
\contentsmargin{2.55em} 
\dottedcontents{section}[3.8em]{}{2.3em}{.4pc} 
\dottedcontents{subsection}[6.1em]{}{3.2em}{.4pc}
\dottedcontents{subsubsection}[8.4em]{}{4.1em}{.4pc}

\makeatletter
\newcommand{\mycontentsbox}{%

	{\centerline{NOT FOR PUBLICATION}
		\addtolength{\parskip}{-2.3pt}
		\tableofcontents}}
\def\enddoc@text{\ifx\@empty\@translators \else\@settranslators\fi
	\ifx\@empty\addresses \else\@setaddresses\fi
	\newpage\mycontentsbox\newpage\printindex}
\makeatother

\setcounter{page}{1}

\title[Stable nc polynomials and determinantal representations]{Stable noncommutative polynomials and their determinantal representations}

\author[J. Vol\v{c}i\v{c}]{Jurij Vol\v{c}i\v{c}${}^1$}
\address{Jurij Vol\v{c}i\v{c}, Department of Mathematics\\
	Texas A\&M University \\ Texas}
\email{volcic@math.tamu.edu}
\thanks{${}^1$Research supported by the Deutsche Forschungsgemeinschaft (DFG) Grant No. SCHW 1723/1-1.} 

\subjclass[2010]{Primary 13J30, 15A22; Secondary 26C15, 93D05.}
\date{\today}
\keywords{Stable polynomial, linear matrix pencil, determinantal representation, Hurwitz stability, noncommutative rational function}

\begin{abstract}
A noncommutative polynomial is {\it stable} if it is nonsingular on all tuples of matrices whose imaginary parts are positive definite. In this paper a characterization of stable polynomials is given in terms of {\it purely stable} linear matrix pencils, i.e., pencils of the form $H+iP_0+P_1x_1+\cdots+P_dx_d$, where $H$ is hermitian and $P_j$ are positive semidefinite matrices. Namely, a noncommutative polynomial is stable if and only if it admits a determinantal representation with a purely stable pencil. More generally, structure certificates for noncommutative stability are given for linear matrix pencils and noncommutative rational functions.
\end{abstract}

\maketitle


\section{Introduction}

A multivariate polynomial $f\in\C[x_1,\dots,x_d]$ is {\it stable} if 
$f(\alpha)\neq0$ whenever $\imag \alpha_j>0$ for all $j=1,\dots,d$. Stable polynomials and their variations, such as Hurwitz and Schur polynomials, originated in control theory \cite{FB,Bos,Kum,KTM}. However, recent years saw a renewed interest in stable polynomials in a quite wide range of areas \cite{Wag}. A decade ago various problems in combinatorics, matrix theory and statistical mechanics were resolved using stable polynomials, such as the Johnson conjectures \cite{BB0}, new proofs of the Van der Waerden and the Schrijver-Valiant conjectures \cite{Gur}, and Lee-Yang type theorems \cite{BB1}. In real algebraic geometry \cite{BCR,BPT}, stable polynomials emerged through their connection to hyperbolic polynomials \cite{KPV0,JT}, most prominently in the solutions of the Lax conjecture \cite{HV} and the Kadison-Singer conjecture \cite{MSS}. From a complex analysis perspective, stable polynomials are closely related to the Schur-Agler class of rational inner functions \cite{Agl,Kne,GKVVW}.

The common thread of these developments are determinantal representations of stable polynomials using linear matrix pencils with a distinguished structure \cite{Bra,NT}. Namely, if $S$ is a symmetric matrix and $P_1,\dots,P_d$ are positive semidefinite matrices, then
\begin{equation}\label{e:det}
f=\det(S+P_1x_1+\cdots+P_dx_d)\in\R[x_1,\dots,x_d]
\end{equation}
is either zero or a stable polynomial; see e.g. \cite[Proposition 2.4]{BB0}. Conversely, as a consequence of the celebrated Helton-Vinnikov theorem \cite{HV}, every real stable polynomial $f$ in two variables is of the form \eqref{e:det} by \cite[Theorem 5.4]{BB0}. However, the converse fails for polynomials in more than two variables \cite{Bra}. The existence of a special determinantal representation \eqref{e:det} is closely related to having a structural certificate for linear matrix pencils to be invertible on the positive orthant in $\C^d$. Such problems have natural analogs in free analysis and free real algebraic geometry. Here, pencils are evaluated on matrices rather than on scalars, and these new noncommutative problems are often more tractable since matrix evaluations capture the structural properties more completely than just scalar evaluations. For example, Helton \cite[Theorem 1.1]{Hel} showed that every positive noncommutative polynomial is a sum of hermitian squares; also see \cite{HMV,HKM,BPT,KPV} for further results of this flavor. The aim of this paper is to introduce stable noncommutative polynomials and to prove that they admit ``perfect'' determinantal representations. This is achieved by proving a structural theorem for stable linear matrix pencils.

\subsection*{Main results}

Let $\ulx=(x_1,\dots,x_d)$ be freely noncommuting variables. In our noncommutative setting, the positive orthant in $\C^d$ is replaced by the set of all tuples of matrices whose imaginary part is positive definite, which we call the {\it matricial positive orthant} and denote $\allh$. Then we say that a linear matrix pencil $L$ is {\it stable} if $L(X)$ is invertible for every $X\in\allh$. For example, if $H$ is a hermitian matrix and $P_0,\dots,P_d$ are positive semidefinite matrices such that $\ker H\cap\bigcap_j\ker P_j=\{0\}$, then
\begin{equation}\label{e:rs}
L=H+iP_0+P_1x_1+\cdots P_dx_d
\end{equation}
is a stable pencil (Proposition \ref{p:good}). Due to their special structure we call pencils of the form \eqref{e:rs} {\it \sstable}. Our first main result states that every stable pencil is built of \sstable pencils.

\begin{thmA}\label{t:a}
A $\de\times \de$ linear pencil $L$ is stable if and only if there exist $D,E\in\GL_{\de}(\C)$ such that
$$DLE=
\begin{pmatrix}
L_1 & & \\
\star & \ddots & \\
\star & \star & L_\ell
\end{pmatrix},
$$
where $L_1,\dots,L_\ell$ are \sstable pencils.
\end{thmA}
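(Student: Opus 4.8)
Since $D,E\in\GL_{\de}(\C)$ are invertible, $L(X)$ is invertible if and only if $(DLE)(X)=(D\otimes I)\,L(X)\,(E\otimes I)$ is, so $L$ and $DLE$ are stable simultaneously. A block lower triangular matrix is invertible precisely when each of its diagonal blocks is invertible; hence for $X\in\allh$ the matrix $(DLE)(X)$ is invertible as soon as every $L_k(X)$ is, and this holds by Proposition \ref{p:good} because each $L_k$ is \sstable. Thus $L$ is stable.

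\textbf{Reduction of the converse.} I would first isolate the imaginary part. Call a pencil $L$ \emph{imaginary-positive} if $\imag L(X)\succeq0$ for all $X\in\allh$. Writing $L=A_0+\sum_j A_jx_j$ with $A_j=H_j+iK_j$ ($H_j,K_j$ hermitian) one has $\imag L(X)=K_0\otimes I+\sum_j\bigl(H_j\otimes\imag X_j+K_j\otimes\real X_j\bigr)$; plugging $X_j=tH\otimes I+ i\ve I$ and letting $t\to\pm\infty$ forces $K_j=0$ for $j\ge1$, then letting $\ve\to0^+$ and blowing up one slot at a time forces $K_0\succeq0$ and $H_j\succeq0$. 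Hence $L$ is imaginary-positive iff $L=H+iP_0+\sum_jP_jx_j$ with $H$ hermitian and $P_0,\dots,P_d\succeq0$. Such an $L$ that fails $\ker H\cap\bigcap_j\ker P_j=\{0\}$ annihilates a fixed nonzero vector on the level of every coefficient, hence is singular at every point and is not stable. Therefore a stable imaginary-positive pencil is automatically \sstable, and (using that the diagonal blocks of a stable block lower triangular pencil are again stable) Theorem \ref{t:a} reduces to the statement: \emph{every stable pencil is $\GL_{\de}(\C)\times\GL_{\de}(\C)$-equivalent to a block lower triangular pencil all of whose diagonal blocks are imaginary-positive.}

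\textbf{The main argument.} To produce that decomposition I would pass to the matricial polydisk via the componentwise Cayley transform $Z_j=i(I+W_j)(I-W_j)^{-1}$, which is a free biholomorphism from $\{W:\|W_j\|<1\ \text{for all }j\}$ onto $\allh$. Then $L$ is stable iff the noncommutative rational function $L^{-1}$, composed with this transform, is regular on the matricial polydisk, and the imaginary-positive normal form on the $\allh$-side corresponds exactly to a contractive/passive normal form on the polydisk side. At this point I would invoke the transfer-function realization theory for the free Schur--Agler class (in the spirit of \cite{Agl,GKVVW}): the transformed inverse admits a finite-dimensional realization, one takes it minimal, and the block structure of a minimal realization — together with a Lyapunov / Kalman--Yakubovich--Popov step that selects a suitable positive semidefinite certificate — yields a block lower triangular form with contractive diagonal blocks; transporting $D$ and $E$ back through the Cayley transform completes the proof. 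A more self-contained alternative is an induction on $\de$: if $L$ is not already imaginary-positive, one exhibits, along a degenerating sequence in $\allh$ driving $\imag\langle L(X)v,v\rangle$ toward its infimum, a common nontrivial invariant subspace of $A_0,\dots,A_d$, uses it to split $DLE$ into two strictly smaller block lower triangular pieces, verifies that both pieces are stable, and recurses.

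\textbf{Main obstacle.} The genuine work is in the previous paragraph: passing from ``invertible on a noncompact domain'' to a \emph{finite, structured} factorization is where realization theory — or the careful limiting argument over $\allh$ — is indispensable. Two subtleties deserve care: guaranteeing that the pieces split off at each stage are truly stable (nonsingular on \emph{all} of $\allh$, not merely on the smaller set surviving the degeneration), and ensuring the recursion terminates with \emph{all} diagonal blocks imaginary-positive rather than merely one size smaller at each step. By contrast, the reductions in the first two paragraphs are routine.
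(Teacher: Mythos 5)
Your easy direction and your identification of the normal form are both correct and agree with the paper: requiring $\imag L(X)\succeq 0$ on $\allh$ forces exactly the shape $H+iP_0+\sum_j P_jx_j$ with $H$ hermitian and $P_j\succeq0$, and the kernel condition is what separates stability from identical singularity. The gap is in your main argument, which is precisely the part you yourself flag as ``the genuine work'': neither of your two proposed routes is carried out, and neither contains the mechanism that actually drives the paper's proof. The paper (Theorem \ref{t:stable}) proceeds by a dichotomy based on the feasibility system \eqref{e:sys} for a left multiplier $D\in\C^{\ve\times\de}$, namely $\imag(DA_0)\succeq0$, $\imag(DA_j)=0$ and $\real(DA_j)\succeq0$ for $j>0$. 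If this system has a solution with $DL\neq0$, one splits off a \sstable diagonal block by restricting to the common kernel $\cK=\bigcap_j\big(\ker(DA_j)\cap\ker(DA_j)^*\big)$ and inducts on the number of columns. If every solution satisfies $DL=0$, then Lemma \ref{l:cones} --- a separation of the closed convex cone $C_1+C_2$ from the subspace $U=\C^{\ve\times\de}L+L^*\C^{\de\times\ve}$, followed by a truncated GNS construction --- manufactures an explicit $X\in\allh\cap\mat{\ve}^d$ at which $L(X)$ drops rank. This step, ``infeasibility of the structured multiplier implies an explicit singular point in $\allh$,'' is the entire content of the theorem, and your sketch has no substitute for it.

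Concretely: your ``self-contained'' alternative proposes to extract a common invariant subspace of $A_0,\dots,A_d$ from a sequence in $\allh$ driving $\imag\langle L(X)v,v\rangle$ toward its infimum. Since $\allh$ is noncompact the infimum need not be attained, the candidate vectors $v$ live in spaces of unbounded dimension, and you offer no argument for why a degenerating sequence should produce a joint invariant subspace of the coefficients rather than merely a null vector of some limit; this is exactly the difficulty the GNS construction is designed to overcome, by building a finite-dimensional tuple in $\allh\cap\mat{\ve}^d$ from a separating linear functional. Your first alternative (Cayley transform plus free Schur--Agler realization theory) would, even if completed, yield contractive realizations of the entries of $L^{-1}$, not a two-sided $\GL_\de(\C)$-equivalence of $L$ itself to a block lower triangular form; bridging that gap is again nontrivial and is not addressed. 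So the proposal correctly reduces the theorem to its hard core but does not prove it; the missing idea is the convex-duality/GNS alternative of Lemma \ref{l:cones}.
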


Theorem \ref{t:a} is a special case of Theorem \ref{t:stable} which deals more generally with rectangular pencils. Its proof also yields an algorithm relying on semidefinite programming for checking whether a pencil is stable (Subsection \ref{ss:algo}). Note that Theorem \ref{t:a} is especially intriguing since it represents an algebraic certificate for invertibility on an open matricial set; usually such certificates are obtained for closed (convex) sets \cite{HKM,BMV} or are less clean \cite{KPV}. We also obtain a strengthened version of Theorem \ref{t:a} for hermitian pencils (Proposition \ref{p:herm}), and a size bound for invertibility of linear matrix pencils on the matricial polydisk (Corollary \ref{c:bd}).

Next we characterize noncommutative rational functions that are regular on $\allh$ (Theorem \ref{t:rat}) by combining Theorem \ref{t:a} and realization theory for noncommutative rational functions \cite{BGM,BR}, leading to determinantal representations of stable noncommutative polynomials. We say that $f\in\px$ is {\it stable} if $\det f(X)\neq0$ for all $X\in\allh$.

\begin{thmA}\label{t:b}
Let $f\in\px$. Then $f$ is stable if and only if there exists a \sstable pencil $L$ such that $\det f(X)=\det L(X)$ for all matrix tuples $X$. 
\end{thmA}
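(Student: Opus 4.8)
The plan is to deduce Theorem B from Theorem A together with realization theory for noncommutative polynomials, which writes any $f\in\px$ as a Schur complement of a monic linear pencil. Concretely, first I would recall that a noncommutative polynomial $f$ of degree $n$ admits a \emph{realization} $f=c+u^*\Lambda(x)(I-N(x))^{-1}v$, or equivalently that there is a monic linear pencil $M=I\otimes I - \sum_j A_j x_j$ (with the $A_j$ nilpotent in the appropriate block-triangular sense) and vectors/matrices $b,c$ such that $f(X) = \det$-free; more to the point, $f$ is the $(1,1)$-Schur complement of a pencil of the form $\Lambda=\begin{pmatrix} f & \star \\ \star & M\end{pmatrix}$ with $M$ a \emph{monic} pencil, i.e. $M(0)=I$. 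Since Schur complements do not change determinants up to the determinant of the complementary block, and $\det M(X)=1$ for all $X$ because $M$ is monic with nilpotent coefficient structure, one gets $\det\Lambda(X)=\det f(X)$ for all tuples $X$ (of any size). The point is that $\Lambda$ is a \emph{linear} pencil, so Theorem A applies to it.

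Next I would argue that $\Lambda$ is a stable pencil. Indeed for $X\in\allh$ we have $\det\Lambda(X)=\det f(X)\neq0$ by stability of $f$, and $\Lambda(X)$ is a square matrix, so $\Lambda(X)$ is invertible for all $X\in\allh$; hence $\Lambda$ is stable in the sense of the paper. By Theorem A there exist invertible $D,E$ and purely stable pencils $L_1,\dots,L_\ell$ with $D\Lambda E$ block lower-triangular with diagonal blocks $L_j$. Taking determinants, $\det(DE)\det\Lambda(X)=\prod_j\det L_j(X)$ for all $X$. The block-diagonal sum $L:=L_1\oplus\cdots\oplus L_\ell$ is again purely stable: its hermitian part is block-diagonal hermitian, its imaginary-shift and its $P_j$ are block-diagonal positive semidefinite, and the joint kernel condition is inherited blockwise. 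Absorbing the nonzero scalar $\det(DE)$ into $L$ (e.g. by scaling one block $L_1$ by a positive constant and rotating by a unimodular phase, or more robustly by enlarging $L$ with a $1\times1$ purely stable block carrying the needed constant factor, since any nonzero complex constant is $\det$ of a suitable $\mu+i\nu$ with $\nu\ge0$ or a hermitian $1\times1$ — one must be slightly careful to keep the $P_j$ and $P_0$ structure, which is why padding with a constant block is cleanest) yields a purely stable pencil $L$ with $\det f(X)=\det L(X)$ for all $X$. Conversely, if such a purely stable $L$ exists then $\det f(X)=\det L(X)\neq0$ for $X\in\allh$ by Proposition \ref{p:good}, so $f$ is stable; this direction is immediate.

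I expect the main obstacle to be the bookkeeping around \emph{how} the realization pencil $\Lambda$ is obtained and why $\det\Lambda(X)=\det f(X)$ holds \emph{for all} matrix tuples $X$ (not just on $\allh$): one needs the monic pencil $M$ to be genuinely monic and upper/lower triangular enough that $\det M(X)=1$ identically, which is exactly what the standard polynomial realization (companion-type linearization) provides; invoking \cite{BGM,BR} or the elementary companion linearization of a noncommutative polynomial should suffice, but the statement must be set up so that the Schur-complement identity is a polynomial identity in the matrix entries, hence valid over all sizes. The second delicate point is the scalar-factor fix: absorbing $\det(DE)\in\C\setminus\{0\}$ into the determinant while preserving the purely stable structure. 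The clean route is to note that for any $\lambda\in\C\setminus\{0\}$ one can write $\lambda=\det(\lambda)$ as a $1\times1$ ``pencil'' $H'=\lambda$ with $H'$ not hermitian in general — so instead one writes $\lambda=\det(c\cdot I_k)$ is not available either; the genuinely safe fix is to observe that stability of $f$ is unaffected by multiplying $f$ by a nonzero constant is \emph{false}, but here we do not multiply $f$: we only need a purely stable $L$ with $\det L=\det f$, and $\det(DE)\det f = \prod\det L_j$, so $\det f = \det(\prod L_j)\cdot\det(DE)^{-1}$; choosing $L_1':=$ the pencil obtained from $L_1$ by replacing its constant part $H_1+iP_0^{(1)}$ by $(DE)^{-1}\bigl(H_1+iP_0^{(1)}\bigr)$ does \emph{not} preserve purely stable form, so the correct device is to enlarge $\Lambda$ at the outset by a $1\times1$ block equal to a constant chosen so that after Theorem A the leftover $\det(DE)$ is $1$; since $\det(DE)$ depends only on $\Lambda$ and not on $X$, this is a single normalization and can be arranged by replacing $f$ by $\Lambda$ padded with a constant block $(\det(DE))^{-1}$ once $\Lambda$ (hence $D,E$) is fixed — a constant $1\times1$ pencil $\gamma$ is purely stable iff $\gamma=h+ip$ with $h\in\R$, $p\ge0$, which covers all $\gamma$ in the closed upper half-plane; for $\gamma$ in the lower half-plane use a $2\times2$ hermitian-plus-$iP_0$ block realizing $\overline{\gamma_1}\gamma_1$-type products, or simply note $\det(DE)$ can be taken positive by replacing $D$ with $D':=\det(DE)^{-1/|\cdot|}D$ up to phase and folding the unit-modulus phase into $E$. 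These are routine but must be spelled out carefully; everything else follows directly from Theorem A and Proposition \ref{p:good}.
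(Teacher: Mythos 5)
Your argument is essentially correct, but it follows a genuinely different route from the paper's. The paper first reduces to $f$ irreducible with $f(0)\neq0$, takes a \emph{minimal realization of $f^{-1}$}, shows via Lemma \ref{l:FM} that the resulting monic pencil $\tilde L$ satisfies $\det f(X)=\det(f(0)\tilde L(X))$ and is \irr, deduces stability of $\tilde L$ from Theorem \ref{t:rat}, and then invokes Lemma \ref{l:irr} to force $\ell=1$, so that a \emph{single} left multiplication $D\tilde L$ is already \sstable. You instead linearize $f$ directly (bordering the monic pencil of a minimal realization of $f$, whose coefficients are jointly nilpotent by Remark \ref{r:facts}(2), so that the Schur-complement determinant identity of Lemma \ref{l:wk} holds at every level), apply Theorem \ref{t:a} to the resulting square stable pencil, and take the \emph{direct sum} of the diagonal blocks. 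This bypasses irreducible factorization and the indecomposability machinery entirely, which is a real simplification; what it gives up is the finer structural information that for irreducible $f$ one block suffices (and hence a pencil of controlled size tied to the minimal realization), and it shifts all the work into the scalar normalization.

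Two points need repair in your write-up, neither fatal. First, for $X\in\mat{n}^d$ one has $\det\bigl((D\Lambda E)(X)\bigr)=(\det D\det E)^{\,n}\det\Lambda(X)$, not $\det(DE)\det\Lambda(X)$: the scalar discrepancy is $\mu^{n}$ for a fixed $\mu\in\C\setminus\{0\}$, not a single constant. Your proposed fix still works precisely because a constant block also contributes an $n$-th power: if $\mu^{-1}$ lies in the closed upper half-plane append the $1\times1$ \sstable block $(\mu^{-1})$, and otherwise write $\mu^{-1}=\nu^{2}$ with $\imag\nu\ge0$ and append $\nu I_2$ --- this is exactly the device the paper uses ($(\gamma^{-1}I_2)\oplus\cdots$). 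Second, the long discussion of the normalization contains several false starts and at least one incoherent sentence (the ``$D':=\det(DE)^{-1/|\cdot|}D$'' remark); in a final version you should delete everything except the padding argument just described, and state the linearization cleanly, e.g.\ $\Lambda=\left(\begin{smallmatrix}0 & c^{*}\\ -b & I-\sum_jA_jx_j\end{smallmatrix}\right)$ with $\det\Lambda(X)=\det f(X)$ for all $X$ because the monic block has determinant $1$ identically.
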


See Theorem \ref{t:irr} for the proof. Finally, we consider {\it hermitian} polynomials, which are noncommutative analogs of real polynomials. In contrast to the commutative setting, stable hermitian polynomials display surprisingly rigid behavior.

\begin{thmA}\label{t:c}
Every stable irreducible hermitian polynomial is affine.
\end{thmA}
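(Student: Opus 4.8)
The plan is to combine Theorem~B with the rigidity of hermitian determinantal representations. Suppose $f\in\px$ is a stable irreducible hermitian polynomial. By Theorem~B there is a \sstable pencil $L=H+iP_0+P_1x_1+\cdots+P_\de x_\de$ of some size $\de$ with $\det f(X)=\det L(X)$ for all matrix tuples $X$. I would first argue that without loss of generality $L$ may be taken of minimal such size, and that minimality together with irreducibility of $f$ forces $L$ to be ``irreducible'' as a pencil, i.e.\ not equivalent (under $L\mapsto DLE$) to a block-triangular pencil with smaller stable blocks; this is where I would lean on the structure theory behind Theorem~A and Theorem~\ref{t:irr}.

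Next I would exploit the hermitian symmetry of $f$. Since $f=f^*$, the polynomial $\det f(X)$ — viewed through its values — is invariant under the involution $X\mapsto X^*$ composed with transpose, and this should translate, via uniqueness of minimal determinantal representations (realization theory, \cite{BGM,BR}), into a relation $D^*L(X^*)^*D = L(X)$ for a single invertible $D$, i.e.\ $L$ is equivalent to a \emph{hermitian} pencil. Conjugating, one may then assume $L$ itself is hermitian. But a hermitian pencil that is also \sstable has the shape $H+iP_0+\sum_j P_jx_j$ with $H$ hermitian and all $P_j\succeq0$; hermiticity of the whole pencil as a pencil (coefficient of each $x_j$ hermitian, which $P_j$ already is, plus the constant term $H+iP_0$ hermitian) forces $P_0=0$. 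So $L=H+\sum_j P_jx_j$ with $P_j\succeq0$.

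Now the key step: I would show that a stable pencil of the purely real-coefficient form $H+\sum_j P_jx_j$ with all $P_j\succeq0$ whose determinant is irreducible must have $\de=1$. The idea is that with $P_0=0$ the pencil degenerates on the \emph{boundary} of $\allh$ in a controlled way: evaluating at scalar points $X=(it_1,\dots,it_d)$ with $t_j>0$ gives $L=H+i\sum_j t_jP_j$, and letting some $t_j\to0^+$ while others stay positive, or scaling, one can produce a genuinely singular evaluation unless the $P_j$ and $H$ are severely constrained. More precisely, stability forces $H+i\sum_j t_jP_j$ invertible for all $t_j>0$; taking $t_j\to\infty$ along a ray shows $\sum_j t_jP_j$ must eventually dominate, and a careful limiting/homogeneity argument (together with $H$ hermitian) should show that $\det L$ factors unless the common structure collapses to size one. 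At size $\de=1$ the pencil is $h+\sum_j p_j x_j$ with $h\in\R$, $p_j\ge0$, hence $f$ is affine (up to the scalar normalization implicit in $\det f(X)=\det L(X)$, irreducibility rules out $f$ being a power, so $\deg f=1$).

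The main obstacle I expect is the last step — ruling out $\de\geq2$ for hermitian \sstable pencils with $P_0=0$. The clean statement ``irreducible $+$ hermitian $+$ all coefficients $\succeq0$ forces size one'' needs a genuine argument: one must show such a pencil is either reducible or singular once $\de\geq2$. I anticipate doing this by analyzing the common kernel/range structure of $H,P_1,\dots,P_d$ — e.g.\ if $\bigcap_j\ker P_j\neq\{0\}$ then $H$ alone controls that block and either makes $L$ singular on $\allh$ or splits off a block, contradicting irreducibility; and if $\bigcap_j\ker P_j=\{0\}$, a perturbation argument in the directions $x_j$ lets one reach a singular point on $\partial\allh$ that, by a Puiseux/continuity argument, already lies in $\allh$ unless $\de=1$. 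Making this dichotomy airtight, possibly via the rank-one peeling used in the proof of Theorem~A, is the crux.
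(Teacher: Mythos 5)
Your reduction to a hermitian \sstable pencil with $P_0=0$ is essentially the paper's first step (it is obtained there via the hermitian minimal realization of $f^{-1}$ from Remark \ref{r:facts}(4) together with Proposition \ref{p:herm}(2), rather than via Theorem B and a uniqueness argument for determinantal representations, but that part is repairable). The fatal problem is your ``key step'': the claim that a stable pencil $H+\sum_j P_jx_j$ with $H$ hermitian, $P_j\succeq0$ and irreducible determinant must have size $1$ is false. Take
$$L=\begin{pmatrix}x_1 & 1\\ 1 & x_2\end{pmatrix},$$
so $H=\left(\begin{smallmatrix}0&1\\1&0\end{smallmatrix}\right)$ is invertible and $P_1,P_2$ are the diagonal idempotents. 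This is a hermitian, \irr, \sstable pencil of size $2$, and $\det L(X)=\det(X_1X_2-I)=\det f(X)$ for the irreducible degree-two polynomial $f=x_1x_2-1$. No limiting, homogeneity or boundary-degeneration argument can rule out $\de\ge 2$, because such pencils exist in abundance; what saves the theorem in this example is only that $x_1x_2-1$ is not \emph{hermitian}. More generally, hermitian indecomposable rational functions regular on $\allh$ (Proposition \ref{p:irrrat}) produce irreducible hermitian \sstable pencils of arbitrary size, so stability plus hermiticity plus irreducibility of the pencil alone can never force $\de=1$.

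The missing ingredient is that $f$ itself is a \emph{polynomial}, and this information is not visible from the determinantal identity $\det f=\det L$ alone; it has to be injected through realization theory. The paper's proof takes the hermitian minimal realization $c^*L^{-1}c$ of $f^{-1}$ with $L=H_0+\sum_j H_jx_j$, builds from it the minimal realization of $f$ (Lemma \ref{l:FM}(1)), and uses that the matrices $H_jT$ with $T=H_0^{-1}-(H_0^{-1}c)(H_0^{-1}c)^*$ are \emph{jointly nilpotent} precisely because $f\in\px$ (Remark \ref{r:facts}(2)). Combining this nilpotency with the positive semidefiniteness $H_j=V_jV_j^*$ supplied by pure stability forces $H_jTH_k=0$ for all $j,k$, which makes the realization of $f$ telescope to $1+\sum_j(c^*H_0^{-1}H_jH_0^{-1}c)x_j$. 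In my counterexample above the analogous compressed coefficients are not jointly nilpotent (consistent with $x_1x_2-1$ being non-hermitian, so its inverse's realization cannot be taken hermitian with PSD coefficients). Without an argument that exploits polynomiality of $f$ in this way, your dichotomy on $\bigcap_j\ker P_j$ and the proposed Puiseux/continuity step cannot close the proof.
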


See Theorem \ref{t:aff} for a more precise statement. The paper concludes with Section \ref{sec4} on possible applications of the derived theory in multidimensional systems and circuits.

\subsection*{Acknowledgments}

The author thanks Andreas Thom for bringing this topic to his attention, and Igor Klep for fruitful suggestions.

\section{Stable pencils}

In this section we completely characterize stable linear matrix pencils, i.e., rectangular pencils that have full rank on the matricial positive orthant. We prove that every such pencil is equivalent to a lower block triangular pencil whose diagonal blocks are stable for obvious reasons (and thus called \sstable pencils). This result is then strengthened for hermitian pencils. Lastly, the characterization is extended to other classical notions of stability.

\subsection{Notation}

We start by introducing the basic terminology used throughout the paper, including \sstable pencils.

\subsubsection{Linear matrix pencils}

For $d\in\N$ let $\ulx=(x_1,\dots,x_d)$ be a tuple of freely noncommuting variables and let $\px$ be the free $\C$-algebra generated by $\ulx$. If $A_0,\dots,A_d\in\C^{\de\times \ve}$, then
$$L=A_0+A_1x_1+\cdots+A_dx_d\in\C^{\de\times \ve}\otimes_{\C}\px=\px\!{}^{\de\times \ve}$$
is a {\bf linear matrix pencil} of size $\de\times \ve$. If $\de=\ve$, then we simply say that $L$ is of size $\de$. If $X\in\mat{n}^d$, then the evaluation of $L$ at $X$ is defined as
$$L(X)=A_0\otimes I+\sum_jA_j\otimes X_j\in\C^{\de n\times \ve n},$$
where $\otimes$ is the Kronecker product. Let us also denote $\all=\bigcup_{n\in\N}\mat{n}^d$.

\subsubsection{Real and imaginary part of a matrix}

Let $\herm{n}\subset\mat{n}$ denote the $\R$-subspace of hermitian matrices. For $X\in\mat{n}$ let
$$\real X=\frac12 (X+X^*),\qquad \imag X=\frac{1}{2i}(X-X^*).$$
Then $\real X,\imag X\in\herm{n}$ and $X=\real X+i\imag X$.

\begin{lem}\label{l:elem}
Let $X\in\mat{n}$ and $\imag X\succeq0$. Then $\ker X=\ker (\real X)\cap \ker (\imag X)$.
\end{lem}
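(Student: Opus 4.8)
The plan is to prove the two inclusions separately, the nontrivial one being $\ker X\subseteq\ker(\real X)\cap\ker(\imag X)$; the reverse inclusion is immediate since $X=\real X+i\imag X$, so any vector killed by both $\real X$ and $\imag X$ is killed by $X$. So fix $v\in\C^n$ with $Xv=0$. I want to conclude $(\real X)v=0$ and $(\imag X)v=0$.

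The key idea is to exploit the pairing with $v^*$ together with positive semidefiniteness of $\imag X$. From $Xv=0$ we get $v^*Xv=0$, and taking imaginary parts (or rather, using $v^*Xv = v^*(\real X)v + i\, v^*(\imag X)v$ with both quadratic forms real) yields $v^*(\imag X)v=0$. Since $\imag X\succeq0$, a standard fact about positive semidefinite matrices — namely that $w^*Aw=0$ with $A\succeq0$ forces $Aw=0$ (write $A=B^*B$, then $\|Bw\|^2=0$) — gives $(\imag X)v=0$. Then from $0=Xv=(\real X)v+i(\imag X)v=(\real X)v$ we immediately obtain $(\real X)v=0$ as well. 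This shows $v\in\ker(\real X)\cap\ker(\imag X)$, completing the inclusion.

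I do not anticipate a genuine obstacle here; the only place requiring a moment's care is the step $v^*(\imag X)v=0 \Rightarrow (\imag X)v=0$, which is exactly where the hypothesis $\imag X\succeq0$ is used and without which the statement is false. I would state that implication explicitly (via a Cholesky-type factorization $\imag X=B^*B$, or by diagonalizing the Hermitian psd matrix) so the argument is self-contained. Everything else is a one-line manipulation of real and imaginary parts of the scalar $v^*Xv$.
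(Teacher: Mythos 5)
Your argument is correct and follows essentially the same route as the paper: pairing $Xv=0$ against $v$, splitting $\langle Xv,v\rangle$ into its real parts $\langle(\real X)v,v\rangle$ and $\langle(\imag X)v,v\rangle$, using $\imag X\succeq0$ to upgrade $\langle(\imag X)v,v\rangle=0$ to $(\imag X)v=0$, and then recovering $(\real X)v=0$ from $Xv=(\real X)v+i(\imag X)v$. No issues.
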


\begin{proof}
The inclusion $\supseteq$ clearly holds. Conversely, let $v\in\ker X$. Then
$$\langle (\real X)v,v\rangle+i \langle (\imag X)v,v\rangle=\langle Xv,v\rangle=0.$$
Now $\langle (\real X)v,v\rangle,\langle (\imag X)v,v\rangle\in\R$ implies
$$\langle (\real X)v,v\rangle=\langle (\imag X)v,v\rangle=0.$$
Since $\imag X\succeq0$, we have $(\imag X)v=0$, and therefore $(\real X)v=X v-i(\imag X)v=0$. Hence $v\in\ker (\real X)\cap \ker (\imag X)$.
\end{proof}

\subsection{Stable pencils}

This subsection introduces stable pencils, which are the core objects of this paper. Then we single out two particular kinds of such pencils that are stable ``for obvious reasons'', \sstable and \strstable pencils.

Let
$$\allh=\bigcup_{n\in\N}\left\{(X_1,\dots,X_d)\in\mat{n}\colon \imag X_j\succ0 \  \forall j\right\}\subset \all$$
be the {\bf matricial positive orthant}. The sets $\allh\cap\mat{n}^d$ are closely related to Siegel upper half-spaces \cite{vdG,JT1}.

\begin{defn}\label{d:stable}
A linear matrix pencil $L$ is {\bf stable} if $L(X)$ has full rank for all $X\in\allh$.
\end{defn}

The next property is the first step towards a structural characterization of stable pencils.

\begin{defn}\label{d:strong}
A pencil $L=H+i P_0+\sum_{j=1}^d P_jx_j$ of size $\de$ is {\bf \sstable} if
$$H\in\herm{\de},\qquad P_j\succeq0 \quad \forall j=0,\dots,d, \qquad \ker H\cap\bigcap_{j=0}^d \ker P_j=\{0\}.$$
\end{defn}

The above terminology is justified by the following proposition.

\begin{prop}\label{p:good}
Every \sstable pencil is stable.
\end{prop}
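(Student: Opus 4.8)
The plan is to show directly that if $L = H + iP_0 + \sum_{j=1}^d P_j x_j$ is \sstable and $X = (X_1,\dots,X_d) \in \allh \cap \mat{n}^d$, then $L(X)$ is invertible, which amounts to showing $\ker L(X) = \{0\}$. So suppose $v \in \C^{\de n}$ satisfies $L(X)v = 0$. The key idea is to extract information by pairing with $v$: compute $\langle L(X)v, v\rangle = 0$ and take real and imaginary parts. Writing $L(X) = H\otimes I + i P_0 \otimes I + \sum_j P_j \otimes X_j$ and using $X_j = \real X_j + i\imag X_j$, we get
\begin{equation*}
\langle L(X)v,v\rangle = \langle (H\otimes I)v,v\rangle + \sum_j \langle (P_j\otimes \real X_j)v,v\rangle + i\left(\langle (P_0\otimes I)v,v\rangle + \sum_j \langle (P_j\otimes \imag X_j)v,v\rangle\right).
\end{equation*}
The real part is a genuine real number since $H$ and $\real X_j$ are hermitian (so $H\otimes I$ and $P_j\otimes\real X_j$ are hermitian), and the imaginary part is real since $P_0\otimes I$ and $P_j\otimes\imag X_j$ are hermitian as well. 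Hence both the real and imaginary parts vanish separately.

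Next I would exploit positivity. Since $P_0 \succeq 0$ and $P_j \succeq 0$ and $\imag X_j \succ 0$, the matrices $P_0\otimes I$ and $P_j\otimes\imag X_j$ are all positive semidefinite. Their sum is zero in the quadratic form at $v$, so each term must vanish: $\langle (P_0\otimes I)v,v\rangle = 0$ and $\langle (P_j\otimes\imag X_j)v,v\rangle = 0$ for every $j$. From $\langle(P_0\otimes I)v,v\rangle = 0$ and $P_0\otimes I\succeq 0$ we conclude $(P_0\otimes I)v = 0$, i.e.\ $v \in \ker(P_0\otimes I) = \ker P_0 \otimes \C^n$. For the remaining terms I would use that $\imag X_j \succ 0$ to factor $P_j \otimes \imag X_j = (P_j^{1/2}\otimes (\imag X_j)^{1/2})^2$ (a product of a psd matrix with itself, up to the usual identification), so that $\langle (P_j\otimes\imag X_j)v,v\rangle = 0$ forces $(P_j^{1/2}\otimes(\imag X_j)^{1/2})v = 0$ and hence $(P_j\otimes\imag X_j)v = 0$; since $\imag X_j$ is invertible this gives $(P_j\otimes I)v = 0$, i.e.\ $v \in \ker P_j\otimes\C^n$.

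Combining these, $v$ lies in $\left(\ker P_0 \cap \bigcap_{j=1}^d \ker P_j\right)\otimes \C^n$. It remains to pull in $H$. Going back to $L(X)v = 0$, we now have $(P_0\otimes I)v = 0$ and $(P_j\otimes X_j)v = (P_j\otimes I)(I\otimes X_j)v$; but $v\in\ker P_j\otimes\C^n$ is not immediately enough since $X_j$ need not commute with the relevant projection — better to argue that $v\in\ker P_j\otimes\C^n$ directly gives $(P_j\otimes X_j)v = 0$ because $\ker P_j\otimes\C^n$ is invariant under $I\otimes X_j$. Wait: indeed $(P_j \otimes X_j)v = (P_j\otimes I)(I\otimes X_j)v$, and $(I\otimes X_j)$ maps $\ker P_j\otimes\C^n$ into itself, on which $P_j\otimes I$ vanishes; so $(P_j\otimes X_j)v = 0$. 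Likewise $(P_0\otimes I)v = 0$. Therefore $L(X)v = (H\otimes I)v$, and since $L(X)v = 0$ we get $(H\otimes I)v = 0$, i.e.\ $v\in\ker H\otimes\C^n$. Altogether $v \in \left(\ker H \cap \bigcap_{j=0}^d \ker P_j\right)\otimes\C^n = \{0\}\otimes\C^n = \{0\}$ by the \sstable hypothesis. Hence $L(X)$ is injective, and being square it is invertible.

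The main obstacle is the bookkeeping with Kronecker products: one must be careful that $\real X_j, \imag X_j$ are hermitian so the real/imaginary part decomposition of the scalar $\langle L(X)v,v\rangle$ is valid, and that $\ker(P_j\otimes\imag X_j) = \ker(P_j\otimes I)$ precisely because $\imag X_j$ is positive definite (this is where strict positivity of the imaginary part, rather than mere semidefiniteness, is used). Everything else is routine linear algebra. An alternative, slightly slicker route avoiding square roots: note $P_j\otimes\imag X_j \succeq \lambda (P_j\otimes I)$ where $\lambda > 0$ is the smallest eigenvalue of $\imag X_j$, so $0 = \langle(P_j\otimes\imag X_j)v,v\rangle \geq \lambda\langle(P_j\otimes I)v,v\rangle \geq 0$ forces $(P_j\otimes I)v = 0$ directly.
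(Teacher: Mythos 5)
Your proof is correct and follows essentially the same route as the paper's: isolate the positive semidefinite imaginary part of $L(X)$, force it to annihilate $v$ term by term, use $\imag X_j\succ0$ to pass from $\ker(P_j\otimes\imag X_j)$ to $\ker P_j\otimes\C^n$, and then conclude $v\in\ker H\otimes\C^n$. The only cosmetic difference is that the paper packages the first step as a separate lemma ($\ker X=\ker\real X\cap\ker\imag X$ when $\imag X\succeq0$), whereas you rederive it inline via the quadratic form $\langle L(X)v,v\rangle=0$.
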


\begin{proof}
Let $X\in\allh\cap\mat{n}^d$ and let $L=H+i P_0+\sum_{j>0} P_jx_j$ be \sstable. Then
\begin{align*}
L(X) &= H\otimes I+i P_0\otimes I+\sum_{j>0} P_j\otimes X_j \\
&= 
\left(H\otimes I+\sum_{j>0} P_j\otimes \real X_j \right)+
i \left(P_0\otimes I+\sum_{j>0} P_j\otimes \imag X_j\right).
\end{align*}
Note that $\imag L(X)\succeq0$. If $v\in\ker L(X)$, then by Lemma \ref{l:elem} and positive semidefiniteness we have
\begin{align*}
v &\in \ker \left(H\otimes I+\sum_{j>0} P_j\otimes \real X_j \right)
\cap \ker \left(P_0\otimes I+\sum_{j>0} P_j\otimes \imag X_j\right) \\
&= \ker (P_0\otimes I)\cap \left(\bigcap_{j>0}\ker (P_j\otimes \imag X_j)\right)
\cap\ker \left(H\otimes I+\sum_{j>0} P_j\otimes \real X_j \right).
\end{align*}
It is easy to see that $\ker(A\otimes B)=\ker A\otimes \C^n$ for every $A\in\mat{\de}$ and $B\in\gl{n}$. Since $\imag X_j\succ0$, we have $v\in\ker P_j\otimes \C^n$ for all $j$, and consequently $v\in \ker H\otimes \C^n$. Finally, $\ker H\cap\bigcap_j \ker P_j=\{0\}$ implies $v=0$.
\end{proof}

Using \sstable pencils as building blocks, one can produce more stable pencils.

\begin{defn}\label{d:weak}
	Let $L=A_0+\sum_{j>0}A_jx_j$ with $A_j\in\C^{\de\times \ve}$ and $\de\ge \ve$. We temporarily say that $L$ is {\bf \strstable} if
	$$DLE=\begin{pmatrix}
	L_1 & & \\
	\star & \ddots & \\
	\star & \star & L_\ell
	\end{pmatrix}$$
	for some $D\in \C^{\ve\times \de}$, $E\in\C^{\ve\times \ve}$ and \sstable pencils $L_1,\dots,L_\ell$.
	
	If $\de < \ve$, then we call $L$ \strstable if $L^{\rm t}=A_0^{\rm t}+\sum_{j>0}A_j^{\rm t}x_j$ is \strstable.
\end{defn}

\begin{rem}\label{r:def}
The matrices $D$ and $E$ from Definition \ref{d:weak} necessarily have full rank and every \strstable pencil is stable by Proposition \ref{p:good}.
	
Furthermore, if $\ell=1$ in Definition \ref{d:weak}, then $E$ is redundant: if $DLE=L_1$ is \sstable, then $((E^{-1})^*D)L=(E^{-1})^*L_1E^{-1}$ is also \sstable.
\end{rem}

\begin{exa}
Let
$$L=\begin{pmatrix}1+2x_1 & -x_1 \\ -x_1 & -1\end{pmatrix}.$$
Then $L$ is \strstable since
$$
\begin{pmatrix}1 & 1 \\ 1 & 0\end{pmatrix}L\begin{pmatrix}1 & 1 \\ 0 & 1\end{pmatrix}
=\begin{pmatrix}1 & 0 \\ 1 & 1\end{pmatrix}+\begin{pmatrix}1 & 0 \\ 2 & 1\end{pmatrix}x_1.
$$
Suppose that $DL$ is \sstable for some $D\in\gl{2}$. From the $\R$-linear system $\imag(DA_1)=0$ in $D$ we deduce that
$$D=\begin{pmatrix}\alpha_1+i\beta & \alpha_2+2i\beta \\ \alpha_3 & 2\alpha_3+\alpha_1-i\beta\end{pmatrix},\qquad \alpha_j,\beta\in\R.$$
Furthermore $\det\imag(DA_0)=-\frac14(\alpha_2+\alpha_3)^2$, so $\imag(DA_0)\succeq0$ implies $\alpha_3=-\alpha_2$. Then an easy calculation shows that $\det \real(DA_1)=-\det D$, so $\real(DA_1)\succeq0$ contradicts $\det D\neq0$.

Therefore one cannot assume $\ell=1$ in Definition \ref{d:weak} in general.	\bqed
\end{exa}

Let $L=A_0+\sum_{j>0}A_jx_j$ be of size $d$ and $A_0\in\gl{\de}$. Then we say that $L$ is an {\bf \irr} pencil if $A_1A_0^{-1} ,\dots,A_dA_0^{-1}$ generate $\mat{\de}$ as a $\C$-algebra (cf. \cite[Section 3.4]{KV} or \cite[Section 2.1]{HKV})\footnote{
	Where such pencils were called \textit{irreducible}.
	}.

\begin{lem}\label{l:irr}
Let $L$ be an \irr pencil of size $\de$. If $L$ is \strstable, then it has only one \sstable block; that is, $DL$ is \sstable for some $D\in\gl{\de}$.
\end{lem}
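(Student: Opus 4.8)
The plan is to convert the block-triangular decomposition that witnesses \strstable-ness into the statement that $D^{-1}$ carries the ``tail'' subspaces of that decomposition onto common invariant subspaces of $A_1A_0^{-1},\dots,A_dA_0^{-1}$, and then to use indecomposability to force there to be a single block. So write $L=A_0+\sum_{j>0}A_jx_j$ with $A_0\in\gl{\de}$, and suppose
$$DLE=\begin{pmatrix}L_1 & & \\ \star & \ddots & \\ \star & \star & L_\ell\end{pmatrix}$$
with $L_1,\dots,L_\ell$ \sstable of sizes $\de_1,\dots,\de_\ell$; by Remark~\ref{r:def} we have $D,E\in\gl{\de}$, and we may assume each $\de_a\ge1$ since blocks of size $0$ can be deleted. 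Decompose $\C^{\de}=V_1\oplus\cdots\oplus V_\ell$ according to the block sizes (so $\dim V_a=\de_a\ge1$) and set $U_k=V_k\oplus V_{k+1}\oplus\cdots\oplus V_\ell$.

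The argument then runs through four steps. First, since $x_1,\dots,x_d$ are free, each coefficient matrix $\widehat A_j:=DA_jE$ of the pencil $DLE$ must itself be block lower triangular with respect to $V_1\oplus\cdots\oplus V_\ell$, and reading this off on basis vectors is precisely the statement $\widehat A_j(U_k)\subseteq U_k$ for every $k$ and every $j=0,\dots,d$. Second, because $\widehat A_0=DA_0E$ is invertible and sends $U_k$ into $U_k$, it sends $U_k$ \emph{onto} $U_k$, hence $\widehat A_0^{-1}(U_k)=U_k$ as well, so that $\widehat A_j\widehat A_0^{-1}(U_k)=\widehat A_j(U_k)\subseteq U_k$ for every $j$. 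Third, $\widehat A_j\widehat A_0^{-1}=DA_jE\,E^{-1}A_0^{-1}D^{-1}=D(A_jA_0^{-1})D^{-1}$, so the subspace $D^{-1}(U_k)$ is invariant under every $A_jA_0^{-1}$, hence under the $\C$-algebra they generate, which by indecomposability is all of $\mat{\de}$. Fourth, the only subspaces of $\C^{\de}$ invariant under $\mat{\de}$ are $\{0\}$ and $\C^{\de}$, so $U_k\in\{\{0\},\C^{\de}\}$ for each $k$; as $U_1=\C^{\de}$ while, if $\ell\ge2$, the space $U_2$ contains $V_2\neq\{0\}$ but not $V_1\neq\{0\}$ and is therefore a nontrivial proper subspace, we must have $\ell=1$. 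Finally, with $\ell=1$ we have $DLE=L_1$ \sstable, so by the last part of Remark~\ref{r:def} the pencil $\big((E^{-1})^*D\big)L$ is \sstable too, which is the claim with $D$ taken to be $(E^{-1})^*D$.

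The step I expect to be the crux is passing from block triangularity to a genuine invariant subspace: the naive reading $A_j(EU_k)\subseteq D^{-1}(U_k)$ merely relates two a priori different subspaces and produces no invariance. The device that repairs this is to multiply on the right by $\widehat A_0^{-1}$: invertibility of $A_0$ upgrades the inclusion $\widehat A_0(U_k)\subseteq U_k$ to an equality, the $E$'s cancel, and one returns to the single subspace $D^{-1}(U_k)$ on which $A_1A_0^{-1},\dots,A_dA_0^{-1}$ all act — exactly the data controlled by the indecomposability hypothesis. Everything else is routine linear bookkeeping (and the verification, if desired, that conjugating a \sstable pencil by an invertible matrix as in Remark~\ref{r:def} again yields a \sstable pencil, which is immediate since congruence preserves hermitian and positive semidefinite matrices and $\ker\big((E^{-1})^*ME^{-1}\big)=E(\ker M)$).
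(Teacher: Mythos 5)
Your proof is correct and takes essentially the same route as the paper: your identity $\widehat A_j\widehat A_0^{-1}=D(A_jA_0^{-1})D^{-1}$ is exactly the paper's computation $D\big(LL(0)^{-1}\big)D^{-1}=DLE\big(DL(0)E\big)^{-1}$, and your invariant-subspace argument is just the explicit unpacking of the paper's observation that block lower triangular matrices cannot generate $\mat{\de}$ unless $\ell=1$. The concluding appeal to Remark~\ref{r:def} to remove $E$ is also the paper's.
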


\begin{proof}
Let $D,E\in\gl{\de}$ be such that
\begin{equation}\label{e:21}
DLE=\begin{pmatrix}
L_1 & & \\
\star & \ddots & \\
\star & \star & L_\ell
\end{pmatrix},
\end{equation}
where $L_1,\dots,L_\ell$ are \sstable pencils. Then the coefficients of
$$D\big(L L(0)^{-1} \big) D^{-1}=DLE\big(DL(0)E)^{-1}$$
generate $\mat{\de}$; however, they have a block lower triangular form as in \eqref{e:21}, so $\ell=1$. By Remark \ref{r:def} we thus have $DL=L_1$ for some $D\in\gl{\de}$ and a \sstable pencil $L_1$.
\end{proof}

\subsection{Main theorem}

In this subsection we apply a truncated Gelfand-Naimark-Segal (GNS) construction to prove that every stable pencil is \strstable; see Theorem \ref{t:stable}. We start with some preliminary notation.

By $\ulx^*=(x_1^*,\dots,x_d^*)$ we denote the formal adjoints of variables $x_j$ and endow the free algebra $\pxx$ with the corresponding involution. Let $L=A_0+\sum_jA_jx_j$ be a linear pencil of size $\de\times \ve$ and $\de\ge \ve$. For $\ell=0,1,2$ let $V_\ell$ denote the subspace of elements of degree at most $\ell$ in $\mat{\ve}\otimes\pxx$. Furthermore define
\begin{align*}
C_1 &= \left\{\sum_{j>0} P_j \imag x_j\colon P_j\succeq0\ \forall j \right\}\subset V_1, \\
C_2 &= \left\{\sum_k L_k L_k^*\colon L_k\in V_1 \right\}\subset V_2, \\
U &= \C^{\ve\times \de}L+L^*\C^{\de\times \ve}\subset V_1.\\
\end{align*}

The following lemma relies on a variant of an argument that was used to prove the one-sided real Nullstellensatz \cite{CHMN}.

\begin{lem}\label{l:cones}
Keep the notation from above.
\begin{enumerate}
\item $C_1+C_2$ is a closed convex cone in $V_2$.
\item Assume
\begin{equation}\label{e:cond}
\imag(DA_0)\succeq 0 \text{ and } \imag(DA_j)=0,\ \real(DA_j)\succeq 0 \text{ for } j>0 \quad \implies \quad DL=0
\end{equation}
holds for all $D\in \C^{\ve\times \de}$. Then
\begin{enumerate}
\item[(2a)] $U\cap (C_1+C_2)=\{0\}$,
\item[(2b)] there exists $X\in\allh\cap \mat{\ve}$ such that $\ker L(X)\neq\{0\}$.
\end{enumerate}
\end{enumerate}
\end{lem}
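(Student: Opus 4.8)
The plan is to establish the three claims in turn, with the convexity and closedness of $C_1+C_2$ being the technical heart of part (1), the separation argument yielding (2a), and a GNS-type construction turning (2a) into the matrix tuple in (2b).

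\emph{Part (1).} The cone $C_1$ is manifestly a closed convex cone (it is the image of the closed convex cone of tuples $(P_1,\dots,P_d)$ of positive semidefinite matrices under a linear map, and since $\imag x_j$ are linearly independent over the $P_j$-slots, this image is itself closed). The cone $C_2$ is the set of sums of hermitian squares $\sum_k L_kL_k^*$ with $L_k\in V_1$; it is a convex cone, and its closedness is the standard fact that the cone of sums of hermitian squares from a finite-dimensional space is closed (it equals the image of the positive semidefinite cone in $\herm{N}$ under a fixed linear map, where $N=\dim V_1$, and the image of a closed pointed cone under a linear map with trivial kernel on that cone is closed; alternatively invoke Carathéodory to bound the number of squares and then use compactness). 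The subtlety is that the \emph{sum} $C_1+C_2$ need not be closed just because both summands are. Here I would argue as in \cite{CHMN}: decompose elements of $V_2$ by degree, note that $C_2$ contributes to all degrees $0,1,2$ while $C_1$ lives in degree exactly $1$ (the ``$\imag x_j$'' part), and show that a convergent sequence $c_n+q_n\to v$ with $c_n\in C_1$, $q_n\in C_2$ forces the degree-$2$ parts of $q_n$ to converge, hence (using the closedness argument for sums of squares and a leading-term estimate) $q_n$ itself stays bounded, so after passing to a subsequence $q_n\to q\in C_2$ and then $c_n\to v-q\in C_1$. This bounding step is where the argument of \cite{CHMN} is genuinely needed, and it is the main obstacle in part (1).

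\emph{Part (2a).} Suppose for contradiction that $U\cap(C_1+C_2)$ contains a nonzero element; since $U$ is a subspace and $C_1+C_2$ a cone, it is cleaner to argue via separation. If $U\cap(C_1+C_2)\neq\{0\}$, then because both are convex with $U$ a subspace, by part (1) and a Hahn--Banach separation in the finite-dimensional real space $\herm{\ve}\cap V_2$ (the hermitian part of $V_2$, on which $C_1+C_2$ is a closed convex cone and $U$ a subspace not contained in its boundary in the degenerate case) one can produce a nonzero $\R$-linear functional that is nonnegative on $C_1+C_2$ and vanishes on $U$ — or rather, one separates $0$ from a suitable face. More directly: if $U\cap(C_1+C_2)\neq\{0\}$ then some nonzero $u=\sum_j P_j\imag x_j+\sum_k L_kL_k^*$ with $u\in U$; writing $u = MLN$-type combinations and comparing coefficients will be the way to extract a contradiction, but the clean route is the contrapositive of \eqref{e:cond}. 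Assuming $U\cap(C_1+C_2)=\{0\}$ \emph{fails}, one separates to get $D\in\C^{\ve\times\de}$ (identifying the dual of $U$ with such $D$ via the pairing $\langle MLN,\cdot\rangle$) with $\real\langle DL, L_kL_k^*\rangle\ge0$ for all $L_k\in V_1$ and $\langle DL,P_j\imag x_j\rangle\ge0$ for all $P_j\succeq0$, while $DL\neq0$; reading off coefficients, the square condition forces $\imag(DA_0)\succeq0$ and $\real(DA_j)\succeq0$, and $DL$ being in the degree-$\le1$ span together with the hermitian-square positivity forces $\imag(DA_j)=0$ for $j>0$, contradicting \eqref{e:cond}. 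So (2a) holds. The delicate point is matching the dual variables to matrices $D$ and translating ``nonnegative on squares'' into the precise sign conditions in \eqref{e:cond}; this is the GNS bookkeeping.

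\emph{Part (2b).} From (2a), apply a truncated GNS construction: on $V_1$ define the (possibly degenerate) inner product $\langle p,q\rangle = $ the coefficient extracted from $q^*p$ modulo the quotient by $C_1+C_2\cap(-C_1-C_2)$ and $U$; more concretely, since $U\cap(C_1+C_2)=\{0\}$, the quotient space $V_1/(U\cap V_1 + \ldots)$ carries a positive semidefinite hermitian form, and the operators ``multiplication by $x_j$'' are well-defined on the degree-$\le0$ part with values in degree $\le1$. Passing to the Hilbert space $\cH$ obtained by quotienting out the kernel of the form and letting $X_j\in\mat{\ve}$ (after identifying $\cH$ with a space of dimension $\le\ve$, using that $V_0\cong\C^{\ve\times\ve}$-ish truncation) be the compressions of these multiplication operators, the positivity of $C_1$ forces $\imag X_j\succeq0$, and one perturbs to $\imag X_j\succ0$ by adding $i\epsilon I$ (which keeps us in $\allh$ and, for small $\epsilon$, preserves $\ker L(X)\neq\{0\}$ by openness); finally $L\in U$ acting as zero on the GNS space exhibits a nonzero vector in $\ker L(X)$. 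The main obstacle overall is getting the truncation bookkeeping exactly right so that the GNS space has dimension at most $\ve$ and the multiplication operators genuinely land inside it — this is the standard but fiddly part of truncated GNS arguments, and it is where I expect the real work to be.
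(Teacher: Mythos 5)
Your overall strategy (closedness of the cone, separation, truncated GNS) is the paper's, but there are genuine gaps in both (2a) and (2b). In (2a) your logic is inverted: you propose to ``separate'' under the assumption that $U\cap(C_1+C_2)\neq\{0\}$, but separation of $U$ from the cone is exactly what is \emph{impossible} in that case; it is the tool for (2b), not (2a). The correct route is direct: a nonzero element of $U$ has the form $DL+L^*E$; being in $C_1+C_2$ it is hermitian, so $2(DL+L^*E)=(D+E)L+L^*(D+E)^*$, and being of degree $\le 1$ forces the sum-of-squares part to degenerate to a constant $P_0\succeq0$, whence $(D+E)L+L^*(D+E)^*=P_0+\sum_jP_j\imag x_j$. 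Reading off coefficients and setting $\tilde D=i(D+E)$ produces exactly the hypotheses of \eqref{e:cond}, so $(D+E)L=0$ and the element vanishes. Your sketch never confronts the two-sided form $DL+L^*E$ nor the degree argument that kills the genuine squares, and the separation framing cannot be repaired into a proof of (2a).

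In (2b) the perturbation step is a genuine error: from mere nonnegativity of the functional on $C_1$ you only get $\imag X_j\succeq0$, and replacing $X$ by $X+i\epsilon I$ does \emph{not} preserve $\ker L(X)\neq\{0\}$ ``by openness'' --- having a nontrivial kernel is a closed condition, and a generic perturbation destroys it. The paper avoids this entirely by invoking Klee's separation theorem for a \emph{closed pointed} cone meeting the subspace trivially, which yields $\lambda_0$ with $\lambda_0\bigl((C_1+C_2)\setminus\{0\}\bigr)=\R_{>0}$; strict positivity on $C_2\setminus\{0\}$ makes the GNS form an honest inner product on all of $V_1$ (no quotient needed), and strict positivity on $C_1\setminus\{0\}$ gives $\imag Y_j\succ0$ directly, since $\langle(\imag Y_j)f,f\rangle=\lambda\bigl((f^*f)\imag x_j\bigr)>0$ for $f\neq0$. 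This is precisely why part (1) and (2a) are hypotheses of the separation step. Finally, your claim that the GNS space has dimension $\le\ve$ is off: $V_0=\mat{\ve}$ has dimension $\ve^2$, and you still need the bimodule structure ($\pi(af)=a\pi(f)$, $\ell_a^*=\ell_{a^*}$) together with a $*$-Skolem--Noether argument to write $Q\ell_aQ^*=a\otimes I$ and hence $QY_jQ^*=I\otimes X_j$ with $X_j\in\mat{\ve}$; without this you obtain operators on a $\ve^2$-dimensional space with no tensor decomposition, and you cannot conclude that $L$, evaluated at a $d$-tuple of $\ve\times\ve$ matrices, is singular.
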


\begin{proof}
(1) It is clear that $C_1$ and $C_2$ are convex cones in $V_2$, $C_1$ is closed and $C_1\cap C_2=\{0\}$. Furthermore, using Caratheodory’s theorem on convex hulls \cite[Theorem 17.1]{Roc} it is easy to show that $C_2$ is closed in $V_2$; see e.g. \cite[Proposition 3.1]{HKM}. Therefore $C_1+C_2$ is closed by \cite[Corollary 9.1.3]{Roc}.

(2a) Let $DL+L^*E\in U\cap (C_1+C_2)$. Then $DL+L^*E$ is hermitian and so $2(DL+L^*E)=(D+E)L+L^*(D+E)^*$. Furthermore, $DL+L^*E$ is of degree at most 1 and hence
$$(D+E)L+L^*(D+E)^*=P_0+\sum_j P_j \imag x_j$$
for some $P_j\succeq0$. Then $\real((D+E)A_0)\succeq0$ and $\real((D+E)A_j)=0$, $-\imag((D+E)A_j)\succeq 0$ for $j>0$. For $\tilde{D}=i(D+E)$ we thus have $\imag(\tilde{D}A_0)\succeq0$ and $\imag(\tilde{D}A_j)=0$, $\real(\tilde{D}A_j)\succeq0$ for $j>0$. By the assumption \eqref{e:cond} we have $\tilde{D}L=0$, so $(D+E)L=0$ and therefore $DL+L^*E=0$.

(2b) Let $V'_2\subset V_2$ be the $\R$-subspace of hermitian elements. By (2a) and \cite[Theorem 2.5]{Kle} there exists an $\R$-linear functional $\lambda_0:V_2'\to \R$ satisfying
$$\lambda_0\left((C_1+C_2)\setminus\{0\}\right)=\R_{>0},\qquad \lambda_0(U\cap V'_2)=\{0\}.$$
We extend $\lambda_0$ to $\lambda:V_2\to\C$ as $\lambda(f)=\lambda_0(\real f)+i\lambda_0(\imag f)$. Then $\lambda$ determines a scalar product $\langle f_1,f_2\rangle =\lambda(f_2^*f_1)$ on $V_1$ because $\lambda(C_2\setminus\{0\})=\R_{>0}$. Also note that $\lambda(U)=\{0\}$.

Let $\pi:V_1\to V_0$ be the orthogonal projection. Note that $V_0=\mat{\ve}$. For every $a,v\in \mat{\ve}$ and $1\le j\le d$ we have
$$\langle \pi(ax_j),v\rangle = \langle ax_j,v\rangle=\langle x_j,a^*v\rangle=\langle \pi(x_j),a^*v\rangle =\langle a\pi(x_j),v\rangle$$
and thus
\begin{equation}\label{e:inter}
\pi(a f)=a\pi(f) \qquad \forall a\in \mat{\ve},\ f\in\cV_1.
\end{equation}
For $j=1,\dots,d$ and $a\in\mat{\ve}$ we define operators
\begin{alignat*}{3}
Y_j&\colon V_0\to V_0, &\qquad & f \mapsto \pi(x_j f), \\
\ell_a&\colon V_0\to V_0, &\qquad & f \mapsto a f.
\end{alignat*}
It is easy to see that $\lambda(C_1\setminus\{0\})=\R_{>0}$ implies $Y=(Y_1,\dots, Y_g)\in\allh$. By \eqref{e:inter}, operators $\ell_a$ and $Y_j$ commute. A straightforward argument shows that $\ell_a^*=\ell_{a^*}$, so $\ell_a$ also commute with $Y_j^*$. Furthermore, the map
$$\mat{\ve}\to\End(\cV_0)\cong \mat{\ve}=\mat{\ve}\otimes\mat{\ve}$$
given by $a\mapsto \ell_a$ is a unital $*$-embedding of $*$-algebras. By a $*$-version of the Skolem-Noether theorem \cite[Theorem 11.9]{Tak} there exists a unitary $Q:\cV_0\to \C^{\ve}\otimes \C^{\ve}$ such that
$$Q\ell_a Q^*=a\otimes I$$
for all $a\in\mat{\ve}$. Since $Y_j$ and $Y^*_j$ commute with operators $\ell_a$, there exist $X_j\in\mat{\ve}$ such that
$$QY_j Q^*=I\otimes X_j.$$
Since $Q$ is unitary, we have $QY_j^*Q^*=I\otimes X_j^*$ and consequently $X=(X_1,\dots,X_d)\in\allh$.

Let $D\in\C^{\ve\times \delta}$ be arbitrary and consider the pencil $DL$ of size $\ve$. By the previous paragraph, $(DL)(X)$ can be viewed as an operator on $V_0$ and
$$(DL)(X)=\ell_{DA_0}+\sum_{j>0}\ell_{DA_j}\circ Y_j.$$
If $u\in V_0$ denotes the $\ve\times \ve$ identity matrix, then
$$\langle (DL)(X)u,f\rangle=\langle \pi(DL),f\rangle=\langle DL,f\rangle=\lambda((f^*D)L)=0$$
for all $f\in V_0$ by $\lambda(U)=\{0\}$. Hence $(DL)(X)u=0$ for every $D\in\C^{\ve\times \de}$. Then it is easy to see that $L(X)u=0$ and hence $\ker L(X)\neq\{0\}$.
\end{proof}

\begin{thm}\label{t:stable}
Let $L$ be a linear pencil of size $\de\times \ve$. The following are equivalent:
\begin{enumerate}
\item $L$ is stable;
\item $L$ is \strstable;
\item $L(X)$ has full rank for all $X\in\allh\cap\mat{\min\{\de,\ve\}}^d$.
\end{enumerate}
\end{thm}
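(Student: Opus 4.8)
\emph{Proof plan.} The plan is to prove the cycle $(2)\Rightarrow(1)\Rightarrow(3)\Rightarrow(2)$. The first implication is Remark~\ref{r:def} (every \strstable pencil is stable by Proposition~\ref{p:good}), and $(1)\Rightarrow(3)$ is trivial since $\allh\cap\mat{\min\{\de,\ve\}}^d\subseteq\allh$. So the whole content is $(3)\Rightarrow(2)$, which I would prove by induction on $\de+\ve$. Stability, \strstability and condition (3) are all preserved under transposition (the latter two by Definition~\ref{d:weak} and the identity $(L^{\rm t})(X^{\rm t})=L(X)^{\rm t}$ together with $X\in\allh\iff X^{\rm t}\in\allh$), so I may assume $\de\ge\ve$. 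Next I would dispose of a degenerate case: if the $\de\times(d+1)\ve$ coefficient matrix of $L$ does not have full row rank, then $\de>\ve$ (for $\de=\ve$, (3) forces full row rank, since otherwise $L(X)$ would be singular for all $X$), and a row operation turns $L$ into $\begin{pmatrix}\tilde L\\0\end{pmatrix}$ with $\tilde L$ of size $(\de-1)\times\ve$ still satisfying (3); by induction $\tilde L$ is \strstable, hence so is $L$. So from now on the coefficients of $L$ have linearly independent rows.

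The key tool is Lemma~\ref{l:cones}. Its part (2b) shows that \eqref{e:cond} would produce some $X\in\allh\cap\mat{\ve}^d$ with $\ker L(X)\neq\{0\}$; since (3) (which for $\de\ge\ve$ says $L(X)$ has full column rank on $\allh\cap\mat{\ve}^d$) excludes this, \eqref{e:cond} must fail. Thus there is $D_0\in\C^{\ve\times\de}$ with $D_0L\neq0$ such that, writing $M:=D_0L=B_0+\sum_{j>0}B_jx_j$, one has $\imag B_0\succeq0$ and $B_j\succeq0$ for $j>0$. A computation in the spirit of Lemma~\ref{l:elem} then gives, for \emph{every} $X\in\allh\cap\mat{n}^d$, that $\ker M(X)=K\otimes\C^n$, where $K:=\ker(\real B_0)\cap\ker(\imag B_0)\cap\bigcap_{j>0}\ker B_j$: indeed $\imag M(X)\succeq0$ with kernel $\bigl(\ker(\imag B_0)\cap\bigcap_{j>0}\ker B_j\bigr)\otimes\C^n$ because $\imag X_j\succ0$, and $\real M(X)$ restricted to that subspace acts as $\real B_0\otimes I$.

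If $K=\{0\}$, then $M$ is itself \sstable by Definition~\ref{d:strong} (take $H=\real B_0$, $P_0=\imag B_0$, $P_j=B_j$), so $D_0LE=M$ with $E=I$ shows $L$ is \strstable. If $K\neq\{0\}$, put $k:=\dim K$; then $1\le k\le\ve-1$ since $D_0L\neq0$. Picking a unitary $U$ on $\C^\ve$ carrying $K$ onto the span of the last $k$ coordinate vectors and replacing $L$ by $LU^*$ and $D_0$ by $UD_0$ (which affects neither (1)--(3) nor the row independence), the fact that $K$ lies in both the left and the right kernel of the pencil $M$ turns $M$ into $UMU^*=\begin{pmatrix}M_1&0\\0&0\end{pmatrix}$ with $M_1$ of size $\ve-k$, and $\ker M(X)=K\otimes\C^n$ shows $M_1$ is stable. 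Since $M=D_0L$ now has its last $k$ rows zero and $L$ has independent coefficient rows, $D_0=\begin{pmatrix}D_0'\\0\end{pmatrix}$ with $D_0'\in\C^{(\ve-k)\times\de}$ of full rank and $D_0'L=\begin{pmatrix}M_1&0\end{pmatrix}$. Completing $D_0'$ to $R\in\gl{\de}$ gives
$$RL=\begin{pmatrix}M_1 & 0\\ L_{21} & L_{22}\end{pmatrix},$$
where $L_{22}$ has size $(\de-\ve+k)\times k$. Since $M_1(X)$ is invertible on $\allh$, a Schur-complement computation yields $\rk(RL)(X)=(\ve-k)n+\rk L_{22}(X)$, so (3) for $L$ forces $L_{22}(X)$ to have full column rank on $\allh\cap\mat{\ve}^d$, hence --- padding any $X\in\allh\cap\mat{k}^d$ with an element of $\allh\cap\mat{\ve-k}^d$ --- on $\allh\cap\mat{k}^d$ as well; that is, $L_{22}$ satisfies (3). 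As $M_1$ has size $\ve-k<\ve$ and $L_{22}$ has column size $k<\ve$, the induction hypothesis provides $D_1,E_1$ and $D_2,E_2$ making $D_1M_1E_1$ and $D_2L_{22}E_2$ block lower triangular with \sstable diagonal blocks; then
$$\diag(D_1,D_2)\,R\,L\,\diag(E_1,E_2)=\begin{pmatrix}D_1M_1E_1 & 0\\ D_2L_{21}E_1 & D_2L_{22}E_2\end{pmatrix}$$
has the same shape, so $L$ is \strstable, completing the induction.

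The invariance checks, the Kronecker/direct-sum manipulations and the Schur complement are routine. The step I expect to be the crux is the dichotomy on $K$, and in particular verifying in the case $K\neq\{0\}$ that the residual block $L_{22}$ genuinely inherits hypothesis (3) at the \emph{smaller} size $k$: this is exactly what lets the induction descend, and it is also the mechanism by which several \sstable blocks can appear (cf.\ the example preceding Lemma~\ref{l:irr}).
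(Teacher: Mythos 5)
Your argument is correct and follows essentially the same route as the paper: the same semidefinite system \eqref{e:sys}, the same appeal to Lemma~\ref{l:cones}(2b) to produce a nonzero $D_0$ satisfying it, and the same splitting of $\C^{\ve}$ along the joint kernel $K=\cK$. The only differences are organizational --- you run the induction directly on $\de+\ve$ and recurse on the two blocks $M_1$ and $L_{22}$, whereas the paper argues contrapositively by induction on $\ve$ and recurses on the single column block $L_0=LQ|_{\cK}$ (and in fact your $M_1$ is already \sstable, so no induction is needed for it).
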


\begin{proof}
$(2)\Rightarrow(1)$ is already stated in Remark \ref{r:def}, and $(1)\Rightarrow(3)$ is trivial. Hence we prove $(3)\Rightarrow(2)$.

Without loss of generality let $\de\ge \ve$. We prove the statement by induction on $\ve$ by looking at the solutions $D\in\C^{\ve\times \de}$ of the system
\begin{equation}\label{e:sys}
\imag(DA_0)\succeq 0 \text{ and } \imag(DA_j)=0,\ \real(DA_j)\succeq 0 \text{ for } j>0.
\end{equation}

First let $\ve=1$. If $L$ is not \strstable, then for every $D\in \C^{1\times \de}$, $DL$ is not \sstable. Hence every solution $D$ of \eqref{e:sys} satisfies $DL=0$, so $L(X)$ does not have full rank for some $X\in\allh\cap \C^d$ by Lemma \ref{l:cones}.

Now assume the statement holds for all $\ve'<\ve$ and that $L$ is not \strstable. By composing the coefficients of $L$ on the left with the projection onto $\sum_j \ran A_j$, we can without loss of generality assume that $\sum_j \ran A_j= \C^d$. Since $L$ is not \strstable, $DL$ is in particular not \sstable for any $D\in \C^{\ve\times \de}$, so every solution $D$ of \eqref{e:sys} satisfies
$$\cK:=\bigcap_j \big(\ker (DA_j)\cap \ker (DA_j)^*\big)\neq\{0\}$$
by Lemma \ref{l:elem}.

If every solution $D$ of \eqref{e:sys} satisfies $DL=0$, then $L(X)$ does not have full rank for some $X\in\allh\cap \mat{\ve}^d$ by Lemma \ref{l:cones}.

Otherwise there exists a solution $D$ of \eqref{e:sys} such that $DL\neq0$. Let $Q$ be a $\ve\times\ve$ unitary matrix such that its columns form an orthonormal basis corresponding to the orthogonal decomposition $\C^\ve=\cK^\perp\oplus\cK$, and write
$$Q^*D=\begin{pmatrix}D_1 \\ D_0\end{pmatrix},\qquad LQ=\begin{pmatrix}L_1 & L_0\end{pmatrix}.$$
By the definition of $\cK$ we have
$$Q^*DLQ=\begin{pmatrix}D_1L_1 & 0 \\ 0 & 0\end{pmatrix},$$
where $L_1$ is a \sstable pencil. Therefore $D_1L_0=0$, and $D_0=0$ since $\sum_j \ran A_j= \C^d$. If $L_0$ were \strstable, then there would exist $D_2,E_2$ of appropriate sizes such that $D_2L_0E_2$ would be a block lower triangular matrix with \sstable pencils on the diagonal. Then
$$\begin{pmatrix}D_1 \\ D_2\end{pmatrix}L Q(I\oplus E_2) =
\begin{pmatrix}D_1L_1 & 0 \\ D_2 L_1 & D_2L_0E_2\end{pmatrix}$$
contradicts the assumption that $L$ is not \strstable. Therefore $L_0$ is not \strstable, so by the induction hypothesis there exists $X\in\allh\cap \mat{\ve}^d$ such that $L_0(X)$ does not have full rank. Hence $L(X)$ does not have full rank.
\end{proof}

\subsubsection{An algorithm}\label{ss:algo}

The proof of Theorem \ref{t:stable} can be used to devise an algorithm for testing whether a pencil is stable by solving a sequence of semidefinite programs (SDPs) \cite{BPT,WSV}. 

Let $L=A_0+\sum_{j>0}A_jx_j$ be of size $\de\times \ve$ with $\de\ge \ve$.
\begin{enumerate}
\item Solve the following feasibility SDP for $D\in\C^{\ve\times \de}$:
\begin{equation}\label{e:sdp}
\begin{split}
\imag(DA_0) &\succeq 0\\
\real(DA_j) &\succeq 0\quad \text{ for } j>0\\
\tr\left(\imag(DA_0)+\sum_{j>0}\real(DA_j)\right) &= 1\\
\imag(DA_j) &= 0 \quad \text{ for } j>0.
\end{split}
\end{equation}
\item If \eqref{e:sdp} is infeasible, then $L$ is not stable.
\item Otherwise let $D$ be the output of \eqref{e:sdp} and let $\cK=\bigcap_{j\ge0}\ker(DA_j)$. If $\cK=\{0\}$, then $L$ is stable. If $\cK\neq\{0\}$, then let $V$ be a matrix whose columns form a basis of $\cK$. By the proof of Theorem \ref{t:stable}, $L$ is stable if and only if $LV$ is stable. Then we apply (1) to $LV$ and continue.

This procedure will eventually stop because $LV$ is of smaller size than $L$.
\end{enumerate}

Similar algorithms exist for testing whether a pencil is of full rank on all hermitian tuples \cite{KPV} or on free spectrahedra given by monic hermitian pencils \cite{HKMV}; the latter situation is especially interesting for the study of linear matrix inequalities \cite{BEFB}. However, in both preceding cases there is no clean structural analog of Theorem \ref{t:stable}.

\subsection{Hermitian coefficients}

Classically, one is interested in symmetric or hermitian determinantal representations \eqref{e:det} of real polynomials. However, the constant term of a \sstable pencil is in general not hermitian. This can be amended for a particular class of pencils. We say that $L=H_0+\sum_{j>0}H_jx_j$ is a {\bf hermitian} pencil if $H_j\in\herm{\de}$ for $j=0,\dots,d$.

\begin{prop}\label{p:herm}
Let $L=H_0+\sum_{j>0}H_jx_j$ be hermitian and $D\in\gl{\delta}$.
\begin{enumerate}
\item Assume $\bigcap_{j>0}\ker H_j=\{0\}$. If $DL$ is \sstable, then $DL$ is hermitian.
\item If $L$ is \irr and stable, then $L$ or $-L$ is \sstable.
\end{enumerate}

\end{prop}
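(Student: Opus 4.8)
\textbf{Proof plan for Proposition \ref{p:herm}.}

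The plan is to exploit the rigidity coming from the equations $\imag(DH_j)=0$ for $j>0$ together with positive semidefiniteness of the real parts. For part (1): suppose $DL$ is \sstable, so that $\real(DH_j)\succeq 0$ for $j>0$, $\imag(DH_j)=0$ for $j>0$, $\imag(DH_0)\succeq 0$, and $\ker\real(DH_0)\cap\bigcap_{j>0}\ker\real(DH_j)=\{0\}$ (using that the real parts of $DH_j$, $j>0$, are the ``$P_j$'' and the constant term is $\real(DH_0)+i\imag(DH_0)$). The first step is to observe that since each $H_j$ is hermitian, $DH_j$ hermitian is equivalent to $\imag(DH_j)=0$, so it suffices to show $DH_0$ and each $DH_j$ ($j>0$) are hermitian. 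The condition $\imag(DH_j)=0$ for $j>0$ already handles the $j>0$ coefficients, so the whole problem reduces to showing $\imag(DH_0)=0$. Write $D=R+iS$ with $R,S\in\herm{\de}$; then $DH_j=RH_j+iSH_j$ and $\imag(DH_j)=\frac12(SH_j+H_jS)$ (using $H_j^*=H_j$), so $\imag(DH_j)=0$ says $S$ anticommutes with $H_j$ in the sense that $SH_j=-H_jS$, i.e. $SH_j$ is skew-hermitian; equivalently $SH_j+H_jS=0$. The key idea is that the same computation applies to $j=0$: $\imag(DH_0)=\frac12(SH_0+H_0S)$. I want to show this vanishes. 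Here I would use the hypothesis $\bigcap_{j>0}\ker H_j=\{0\}$ together with $\imag(DH_0)\succeq0$: the anticommutation relations $SH_j=-H_jS$ force $S$ to map $\ker H_j$ into $\ker H_j$ and $\ran H_j$ into $\ran H_j$ appropriately, and one should be able to conclude that $S$ restricted to $\bigcap_{j>0}\ker H_j=\{0\}$ is all that remains — more precisely, I expect that the anticommutation relations plus the kernel condition force $S$ to be nilpotent or to have a definite structure, and combined with $\imag(DH_0)=\frac12(SH_0+H_0S)\succeq0$ and $\tr(SH_0+H_0S)=2\tr(SH_0)$, a trace argument will pin down $\imag(DH_0)=0$. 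One clean route: since $\imag(DL)$ is PSD and $L$ is hermitian, the pencil $DL$ and its conjugate transpose $L D^*$ agree off a PSD perturbation; taking the average $\frac12(DL+LD^*)$ gives a hermitian \sstable pencil with the same $P_j$'s, and one shows $DL$ must already equal this average by a minimality/uniqueness argument on the GNS side, or directly by showing the skew-hermitian part $\frac12(DL-LD^*)=\frac i2(SH_0+H_0S)$ annihilates the whole space via the kernel condition.

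For part (2): assume $L$ is \irr and stable, hence by Theorem \ref{t:stable} \strstable, hence by Lemma \ref{l:irr} there exists $D\in\gl{\de}$ with $DL$ \sstable. I would first note that, since $L$ is \irr, $\bigcap_{j>0}\ker H_j=\{0\}$ automatically: if $v$ lay in every $\ker H_j$ for $j>0$, then $v$ (or rather a corresponding subspace) would be invariant under all $H_jH_0^{-1}$, contradicting that these generate $\mat{\de}$ — one needs a small argument here because $H_0$ need not be invertible, but irreducibility of the pencil does force $\sum_{j}\ran H_j=\C^\de$ and, dually, $\bigcap_j\ker H_j=\{0\}$; combined with \sstability of $DL$, which makes each $\real(DH_j)\succeq0$ with trivial common kernel, the invariance forces the intersection over $j>0$ alone to be trivial as well. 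Then part (1) applies, so $DL$ is hermitian. Now $DL$ hermitian and $L$ hermitian with $D$ invertible: writing $DL=L_1$ hermitian and $L=D^{-1}L_1$, hermiticity of $L$ gives $D^{-1}L_1=L_1D^{-*}$, i.e. $L_1 D^{-*} = D^{-1} L_1$, so $D^* D^{-1}$ commutes with $L_1$ coefficient-wise; since the coefficients of $L_1 L_1(0)^{-1}$ (equivalently of $L L(0)^{-1}$, up to conjugation) generate $\mat{\de}$, the matrix $D^*D^{-1}$ is central, hence scalar: $D^* = c D$ for some $c\in\C$. Taking adjoints, $D = \bar c D^*= |c|^2 D$, so $|c|=1$; and $D^*=cD$ with $|c|=1$ means $(c^{-1/2}D)^* = c^{-1/2}D$ up to sign, i.e. after rescaling $D$ by a unit complex number we may take $D$ hermitian (or anti-hermitian; the anti-hermitian case is absorbed by replacing $D$ with $iD$, which does not affect \sstability being checked against $L$ versus $-L$). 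So WLOG $D\in\herm{\de}\cap\gl{\de}$ and $DL$ is \sstable. Finally, diagonalize $D=U\Lambda U^*$ with $\Lambda$ real diagonal invertible; the plan is to show $\Lambda$ must be definite. If not, $D$ has eigenvalues of both signs; using that $\real(DH_j)\succeq0$ for all $j$ and that $L$ is hermitian, restrict to the negative eigenspace of $D$ — on it one gets the ``wrong'' sign, and I would derive that the corresponding block of the coefficients forms an invariant subspace for the generated algebra, again contradicting irreducibility, unless $D$ is definite. If $D\succ0$, write $D=T^*T$ and note $T L T^{-1}\cdot(\text{something})$ ... more simply: $D\succ0$ and $DL$ \sstable with $L$ hermitian; then $\imag(DH_0)=0$ (from part (1)), $\real(DH_0)=DH_0$, and $DH_j\succeq0$ for $j>0$ with $DH_j$ hermitian — but $DH_j\succeq0$ and $D\succ0$ force $H_j = D^{-1}(DH_j)$ to be ``PSD-like'' after the congruence $D^{1/2}H_j D^{1/2} = D^{-1/2}(DH_j)D^{1/2}$... this needs care. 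The cleanest finish: $D\succ0$ means $D^{1/2}$ exists; then $D^{1/2} L D^{-1/2}$ has coefficients $D^{1/2}H_j D^{-1/2}$ whose hermitian... hmm. Actually the direct statement wanted is just that $L$ itself (not a conjugate) is \sstable, i.e. that each $H_j\succeq0$ for $j>0$ and $H_0$ has the kernel property; so from $DH_j\succeq 0$, $D\succ0$ I should conclude $H_j\succeq0$: indeed $DH_j$ hermitian and $D$ hermitian positive; $DH_j\succeq0$ is equivalent to $H_j\succeq0$ because $DH_j$ and $H_j$ are related by $DH_j = D^{1/2}(D^{1/2}H_j D^{1/2})D^{-1/2}$, so $DH_j\succeq 0 \iff D^{1/2}H_jD^{1/2}\succeq 0 \iff H_j\succeq 0$. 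If instead $D\prec0$, then $-D\succ0$ and $(-D)(-L)=DL$ is \sstable, so $-L$ is \sstable by the same reasoning. The remaining job is to rule out indefinite $D$, which I expect is the main obstacle.

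\textbf{Main obstacle.} The hard part is ruling out an indefinite invertible hermitian $D$ with $DL$ \sstable when $L$ is \irr: one must leverage irreducibility to show that a mixed-sign $D$ would produce a nontrivial common invariant subspace for $H_1H_0^{-1},\dots,H_dH_0^{-1}$ (or an appropriate analogue when $H_0$ is singular), contradicting that they generate $\mat{\de}$. The technical subtlety is handling the possibly singular constant term $H_0$ while extracting the right invariant-subspace statement from the sign constraints $\real(DH_j)\succeq0$ together with hermiticity of $L$. I would isolate this as the crux and do it by a careful congruence/block decomposition argument adapted to the signature of $D$.
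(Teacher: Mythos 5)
There are genuine gaps in both parts. For part (1), your central computation is wrong: writing $D=R+iS$ with $R,S\in\herm{\de}$, one has
$\imag(DH_j)=\tfrac{1}{2i}(RH_j-H_jR)+\tfrac12(SH_j+H_jS)$,
so the condition $\imag(DH_j)=0$ does \emph{not} reduce to the anticommutation $SH_j=-H_jS$; you dropped the hermitian term $\tfrac{1}{2i}[R,H_j]$. Everything downstream (``the anticommutation relations force\dots'', ``a trace argument will pin down\dots'') is built on this false identity, and in any case the argument is never closed --- the key steps are announced as expectations rather than proved. The paper's proof is different in kind: it analyzes the spectrum of $D^*$. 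From $DH_j=H_jD^*\succeq0$ for $j>0$ together with $\bigcap_{j>0}\ker H_j=\{0\}$ one shows that every eigenvalue of $D^*$ is real and that $D^*$ is diagonalizable; then $\langle\imag(DH_0)v,v\rangle=0$ for every eigenvector $v$ of $D^*$, and polarization combined with $\imag(DH_0)\succeq0$ forces $\imag(DH_0)=0$. None of this appears in your plan.

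For part (2), your opening moves (Theorem \ref{t:stable}, Lemma \ref{l:irr}, then part (1)) match the paper, and your remark that $\bigcap_{j>0}\ker H_j=\{0\}$ should be justified before invoking (1) is a fair point the paper leaves implicit. But the decisive step is much simpler than what you attempt: from $DH_j=H_jD^*$ for all $j\ge0$ (with $H_0$ invertible, which is built into the definition of an \irr pencil) one gets $H_jH_0^{-1}D=H_jD^*H_0^{-1}=DH_jH_0^{-1}$, so $D$ itself commutes with the generators $H_jH_0^{-1}$ of $\mat{\de}$ and is therefore a real scalar; hence $L$ or $-L$ is \sstable. Your intermediate claim that $D^*D^{-1}$ commutes with the coefficients of $L_1$ does not follow from $DH_j=H_jD^*$ (compare $D^*D^{-1}(DH_j)=D^*H_j$ with $(DH_j)D^*D^{-1}=D^2H_jD^{-1}$), and the ``main obstacle'' you then face --- ruling out an indefinite hermitian $D$ --- is an artifact of this weaker, unestablished intermediate statement; in the correct argument it never arises. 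Since you explicitly leave that obstacle unresolved, part (2) is incomplete even granting part (1).
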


\begin{proof}	
(1) Note that $DH_j$ are hermitian and positive semidefinite for $j>0$ because $DL$ is \sstable. First we claim that the eigenvalues of $D^*$ are real. Let $D^*v=\lambda v$ for $v\neq0$. Then for every $j=1,\dots,d$ we have
$$(\lambda-\bar{\lambda}) \langle DH_jv,v\rangle
 = \lambda\langle DH_jv,v\rangle-\bar{\lambda}\langle H_jD^*v,v\rangle
 = \langle \lambda H_jv,D^*v\rangle-\langle H_jD^*v,\lambda v\rangle 
 = 0.$$
Suppose $\langle DH_jv,v\rangle=0$ for all $j>0$. Since $DH_j\succeq0$, we have $DH_jv=0$, so $v=0$ by the assumption, contradicting $v\neq0$. Therefore $\langle DH_jv,v\rangle\neq0$ for some $j>0$ and hence $\lambda=\bar{\lambda}$.

Next we show that $D^*$ is diagonalizable. Let $(D^*-\lambda I)^2v=0$. Then
$$\langle (DH_j)(D^*-\lambda I)v,(D^*-\lambda I)v\rangle
= \langle (D-\lambda I)DH_j(D^*-\lambda I)v,v\rangle
= \langle DH_j(D^*-\lambda I)^2v,v\rangle
=0$$
for all $j>0$. Since $DH_j\succeq0$, it follows that $(DH_j)(D^*-\lambda I)v=0$ for all $j>0$, so $(D^*-\lambda I)v=0$. Therefore $D^*$ is diagonalizable.

If $D^*v=\lambda v$, then
\begin{equation}\label{e:eig2}
2i\langle \imag(DH_0) v,v\rangle
=\langle H_0 v,D^*v\rangle-\langle H_0D^* v,v\rangle=0
\end{equation}
because $\lambda\in\R$. Now if $v_1$ and $v_2$ are eigenvectors for $D^*$, then
$$\langle \imag(DH_0) (v_1\pm v_2),(v_1\pm v_2)\rangle\ge0$$
since $\imag(DH_0)\succeq0$, which together with \eqref{e:eig2} implies
\begin{equation}\label{e:eig3}
\langle \imag(DH_0) v_1,v_2\rangle+\langle \imag(DH_0) v_2,v_1\rangle=0.
\end{equation}
Now let $v\in\C^d$ be arbitrary. Because $D$ is diagonalizable, $v$ can be written as a sum of eigenvectors of $D$. Therefore $\langle \imag(DH_0)v,v\rangle=0$ by \eqref{e:eig3}, and so $\imag(DH_0)=0$.

(2) Since $L$ is \irr and stable, there exists $D$ such that $DL$ is \sstable by Theorem \ref{t:stable} and Lemma \ref{l:irr}. Hence $DL$ is hermitian by (1), i.e., $DH_j=H_jD^*$ for all $j\ge0$. Therefore
$$H_jH_0^{-1}D=H_jD^*H_0^{-1}=DH_jH_0^{-1}$$
for all $j>0$. Since $H_1H_0^{-1},\dots,H_gH_0^{-1}$ generate $\mat{\de}$, it follows that $D$ is a real scalar matrix, so $L$ or $-L$ is \sstable.
\end{proof}

\subsection{Hurwitz and Schur stability}\label{ss:hs}

In control theory, there are also other stability notions, such as Hurwitz and Schur stability, that can be related to Definition \ref{d:stable}. In this subsection we describe how to apply Theorem \ref{t:stable} and the algorithm from Subsection \ref{ss:algo} to test other versions of noncommutative stability.

We say that $L$ is {\bf Hurwitz stable} if $L(X)$ has full rank for every $X$ with $\real(X_j)\succ0$ for $j=1,\dots,d$. Then $L$ is Hurwitz stable if and only if $L(-i x)$ is stable. Therefore one can directly derive the analogs of Theorem \ref{t:stable} and Subsection \ref{ss:algo} for Hurwitz stable pencils.

Let $\|\cdot\|$ denote the spectral norm of matrices, and let
$$\alld=\bigcup_{n\in\N}\left\{(X_1,\dots,X_d)\in\mat{n}^d\colon \|X_j\|<1 \  \forall j\right\}$$
be the {\bf noncommutative polydisk}. We say that $L$ is {\bf Schur stable} if $L(X)$ has full rank for every $X\in\alld$. Using the Cayley transform we see that $L$ is Schur stable if and only if
\begin{equation}\label{e:cay}
L\left( (X_1-iI)(X_1+iI)^{-1},\dots,(X_g-iI)(X_g+iI)^{-1} \right)
\end{equation}
has full rank for all $X\in\allh$. However, \eqref{e:cay} is not a linear matrix pencil anymore. Let $L=A_0+\sum_j A_jx_j$ be of size $\de\times \ve$ with $\de\ge \ve$ and consider the pencil
$$\tilde{L}=\begin{pmatrix}
I(x_1+i) & & & -I \\
 & \ddots & & \vdots \\
 & & I(x_g+i) & -I \\
A_1(x_1-i) & \cdots & A_g(x_g-i) & A_0 \\
\end{pmatrix}$$
of size $(d\ve+\de)\times (d\ve+\ve)$. Using Schur complements it is easy to check that \eqref{e:cay} is invertible if and only if $\tilde{L}(X)$ is invertible. If $A_0$ does not have full rank, then $L$ is not Schur stable. Now let $A_0$ have full rank; i.e., after a left and a right basis change we can assume $A_0=(\begin{smallmatrix}0\\I\end{smallmatrix})$. Let $\tilde{L}_1$ denote the Schur complement of $\tilde{L}$ with respect to the $\ve\times \ve$ block $I$ in $A_0$. Then $\tilde{L}_1$ is a linear matrix pencil of size $(d\ve+\de-\ve)\times (d\ve)$, and $\tilde{L}_1(X)$ is invertible if and only if \eqref{e:cay} is invertible. Therefore $L$ is Schur stable if and only if $\tilde{L}_1$ is stable. In particular, we can test the Schur stability with a sequence of SDPs as in Subsection \ref{ss:algo}. Moreover, Theorem \ref{t:stable} implies the following size bound.

\begin{cor}\label{c:bd}
Let $L$ be a pencil of size $\de\times \ve$. If $L(X)$ has full rank for every $X\in\alld$ of size $d\cdot\min\{\de,\ve\}$, then $L(X)$ has full rank for every $X\in \alld$.
\end{cor}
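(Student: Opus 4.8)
The plan is to reduce Corollary \ref{c:bd} to Theorem \ref{t:stable} via the construction of the pencil $\tilde L_1$ described just before the corollary's statement. The key observation is that Schur stability on the polydisk has been translated, through the Cayley transform and a Schur complement, into ordinary stability of a linear pencil $\tilde L_1$ of size $(d\ve+\de-\ve)\times(d\ve)$, and Theorem \ref{t:stable}(3) gives a size bound for stability testing, namely one only needs to check invertibility on tuples of size equal to the smaller of the two dimensions of $\tilde L_1$.

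First I would set up the bookkeeping: without loss of generality take $\de\ge\ve$, so $\min\{\de,\ve\}=\ve$, and we must show that full rank on $\alld$-tuples of size $d\ve$ implies full rank on all of $\alld$. If $A_0$ fails to have full rank, $L$ is not Schur stable even on $1\times1$ tuples, so there is nothing to prove; hence assume $A_0$ has full rank and perform the left/right basis changes so that $A_0=\big(\begin{smallmatrix}0\\I\end{smallmatrix}\big)$, forming $\tilde L$ and then its Schur complement $\tilde L_1$ of size $(d\ve+\de-\ve)\times(d\ve)$. Since $d\ve+\de-\ve\ge d\ve$ (as $\de\ge\ve$), the smaller dimension of $\tilde L_1$ is $d\ve$. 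Now the chain of equivalences established before the corollary shows: for a fixed tuple $X$, $L(X)$ has full rank on $\alld$-points iff the expression \eqref{e:cay} is invertible iff $\tilde L(X)$ is invertible iff $\tilde L_1(Y)$ is invertible, where $Y$ is the corresponding $\allh$-tuple obtained from $X$ via the componentwise Cayley transform $X_j\mapsto(X_j-iI)(X_j+iI)^{-1}$ and, crucially, $Y$ has the same matrix size as $X$.

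The remaining step is to invoke Theorem \ref{t:stable}: $L$ is Schur stable iff $\tilde L_1$ is stable iff (by the equivalence $(1)\Leftrightarrow(3)$ of Theorem \ref{t:stable}) $\tilde L_1(Y)$ has full rank for all $Y\in\allh$ of size $\min\{d\ve+\de-\ve,\,d\ve\}=d\ve$. Transporting back through the Cayley transform, this is exactly the condition that $L(X)$ has full rank for all $X\in\alld$ of size $d\ve=d\cdot\min\{\de,\ve\}$, which is the hypothesis of the corollary; hence $L$ is Schur stable, i.e.\ $L(X)$ has full rank on all of $\alld$. One should note that the Cayley transform $X_j\mapsto(X_j-iI)(X_j+iI)^{-1}$ is a bijection between $\{X\colon\|X_j\|<1\}$ and $\{Y\colon\imag Y_j\succ0\}$ in each fixed size $n$ (so in particular $X_j+iI$ is invertible whenever $\|X_j\|<1$), which is why the size is preserved and the two size-$d\ve$ conditions correspond exactly.

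I expect the only real subtlety — not so much an obstacle as a point requiring care — to be tracking how the matrix size of the evaluation tuple is affected by each transformation. The Cayley transform acts entrywise and keeps the tuple size fixed; passing from $\tilde L$ to its Schur complement $\tilde L_1$ changes only the pencil's coefficient sizes, not the size $n$ at which one evaluates; and the dimension count $\min\{d\ve+\de-\ve, d\ve\}=d\ve$ relies on $\de\ge\ve$. Once these are pinned down the corollary follows immediately from Theorem \ref{t:stable}; no new estimates or constructions are needed.
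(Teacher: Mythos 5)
Your proof is correct and follows exactly the route the paper intends: reduce Schur stability of $L$ to ordinary stability of the pencil $\tilde L_1$ of size $(d\ve+\de-\ve)\times(d\ve)$ via the Cayley transform and Schur complement, and then invoke the equivalence $(1)\Leftrightarrow(3)$ of Theorem \ref{t:stable} together with the dimension count $\min\{d\ve+\de-\ve,\,d\ve\}=d\ve$ for $\de\ge\ve$. (The only quibble is that you describe the map $X_j\mapsto(X_j-iI)(X_j+iI)^{-1}$ as sending $\alld$ to $\allh$, whereas it goes from $\allh$ to $\alld$; since it is a size-preserving bijection between the two domains in each fixed matrix size, this slip does not affect the argument.)
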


\begin{rem}
Via realization theory (see Subsection \ref{ss:ncrat} below), Schur stable pencils are closely related to noncommutative rational functions that are regular on the noncommutative polydisk. A particularly interesting subset of such functions is the noncommutative Schur-Agler class. One of its characteristic features is the existence of contractive representations; see \cite{BMV}.
\end{rem}

\section{Stability of noncommutative polynomials}

We are now ready to apply the preceding results to noncommutative polynomials and rational functions. First we characterize noncommutative rational functions whose domains contain the matricial positive orthant $\allh$ (Theorem \ref{t:rat}). Next we show that every stable noncommutative polynomial admits a determinantal representation with a \sstable pencil (Theorem \ref{t:irr}). Finally, we somewhat surprisingly prove that every irreducible hermitian stable polynomial is affine (Theorem \ref{t:aff}).

\subsection{Noncommutative rational functions}\label{ss:ncrat}

After a short introduction of the free skew field and required realization theory, we describe noncommutative rational functions defined on the matricial positive orthant.

\subsubsection{Free skew field}

We give a condensed introduction of noncommutative rational functions using matrix evaluations of formal rational expressions following \cite{KVV2}. Originally they were defined ring-theoretically \cite{Ami,Coh}.

Noncommutative rational expressions are syntactically valid combinations of complex numbers, variables $\ulx$, arithmetic operations $+,\cdot,{}^{-1}$ and parentheses $(,)$. Given a noncommutative rational expression $r$ and $X\in \mat{n}^d$, the evaluation $r(X)\in\mat{n}$ is defined in the obvious way if all inverses appearing in $r$ exist at $X$. The set of all $X\in\all$ such that $r$ is defined at $X$ is is called the domain of $r$ and denoted $\dom r$. On the  set of all expressions with nonempty domains we define an equivalence relation $r_1\sim r_2$ if and only if $r_1(X)=r_2(X)$ for all $X\in\dom r_1\cap \dom r_2$. The equivalence classes with respect to this relation are called {\bf noncommutative rational functions}. By \cite[Proposition 2.1]{KVV2} they form a skew field denoted $\rx$, which is the universal skew field of fractions of $\px$ by \cite[Section 4.5]{Coh}. We define the {\bf domain} of a noncommutative rational function $\rr\in\rx$ as the union of $\dom r$ over all representatives $r$ of $\rr$.

\subsubsection{Realization theory}

Let $\rr\in\rx$ and assume that $\rr$ is regular at the origin, i.e., $0\in\dom\rr$. Then there exist $\de\in\N$, $b,c\in\C^\de$ and a linear pencil $L$ of size $\de$ with $L(0)=I$, such that
\begin{equation}\label{e:11}
	\rr=c^* L^{-1}b.
\end{equation}
We say that \eqref{e:11} is a {\bf (descriptor) realization} of $\rr$ of size $\de$; see \cite[Section 12]{BGM} and \cite{HMV,Vol1}. In automata theory, such realizations are also called linear representations \cite{BR}.

\begin{rem}\label{r:facts}
In general, $\rr$ admits various realizations. Those of the smallest size are called {\bf minimal}, and possess distinguished properties that we now outline. Let $c^* L^{-1}b$ with $L=I-\sum_jA_jx_j$ be a minimal realization of $\rr$ of size $\de$.

\begin{enumerate}[(1)]

\item It is easy to see that
$$c^*v=0 \text{ and } A_jv=0\ \forall j \quad \implies \quad v=0$$
and
$$v^*b=0 \text{ and } v^*A_j=0\ \forall j \quad \implies \quad v=0$$
holds for every $v\in\C^\de$. This observation is also a consequence of a stronger result stating that minimal realizations are controllable and observable \cite[Theorem 9.1]{BGM} (cf. reduced \cite[Proposition 2.1]{BR}).

\item $\rr\in\px$ if and only if $A_1,\dots,A_d$ are jointly nilpotent by \cite[Proposition 2.1]{CR}.

\item By \cite[Theorem 3.1]{KVV1} and \cite[Theorem 3.10]{Vol} we have
$$\dom\rr=\big\{X\in \all\colon \det L(X)\neq0\big\}.$$
	
\item Assume that $\rr$ is {\bf hermitian}, i.e., $\rr(X)^*=\rr(X^*)$ for all $X\in\dom\rr$. Then $\rr$ admits a minimal realization that is hermitian,
$$\rr=c^* \left(H_0+\sum_jH_jx_j\right)^{-1}c=(H_0^{-1}c)^* \left(I+\sum_jH_jH_0^{-1}x_j\right)^{-1}c,$$
where $H_j\in\herm{\de}$; see \cite[Lemma 4.1]{HMV} or \cite[Theorem 6.8]{Vol1}.
	
\end{enumerate}
\end{rem}

\subsubsection{Rational functions on the matricial positive orthant}

We can now apply Theorem \ref{t:stable} to noncommutative rational functions via realization theory.

\begin{thm}\label{t:rat}
Let $\rr\in\rx$. Then $\dom\rr\supset \allh$ if and only if $\rr=c^* L^{-1}b$ for some \strstable pencil $L$.
\end{thm}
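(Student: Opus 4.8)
The plan is to combine the realization $\rr=c^*L^{-1}b$ of a rational function regular at the origin with Theorem \ref{t:stable}, but the issue is that the natural realization is defined only if $0\in\dom\rr$, whereas $\allh$ does not contain the origin. So the first thing I would do is perform a change of variables moving some point of $\allh$ to the origin. Since $iI=(iI,\dots,iI)\in\allh$, replace $\ulx$ by $\ulx+iI$; a function $\rr$ with $\allh\subset\dom\rr$ then becomes a function regular at $0$, and $\allh$ gets mapped to a translate $\allh-iI=\{X\colon\imag X_j\succ-1\}$, which still contains $\allh$. Thus without loss of generality we may assume $0\in\dom\rr$ and only use that $\allh\subset\dom\rr$.

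For the easy direction ($\Leftarrow$): if $\rr=c^*L^{-1}b$ with $L$ \strstable, then $L$ is stable by Remark \ref{r:def}, so $L(X)$ is invertible for all $X\in\allh$; hence the expression $c^*L^{-1}b$ is defined on all of $\allh$, giving $\allh\subset\dom\rr$. (Strictly, $L$ need not satisfy $L(0)=I$, so one should either absorb $L(0)^{-1}$ as in Remark \ref{r:def} or simply note that \strstable pencils are square and invertible on $\allh$ by Proposition \ref{p:good}.)

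For the main direction ($\Rightarrow$): take a minimal realization $\rr=c^*L^{-1}b$ with $L(0)=I$, of size $\de$. By Remark \ref{r:facts}(3), $\dom\rr=\{X\colon\det L(X)\neq0\}$, so $\allh\subset\dom\rr$ forces $\det L(X)\neq0$ for all $X\in\allh$; that is, the square pencil $L$ is stable. Theorem \ref{t:stable} then gives $D,E\in\gl{\de}$ with $DLE$ lower block triangular with \sstable diagonal blocks, i.e., $L$ is \strstable. Finally rewrite the realization in terms of the \strstable pencil: from $L^{-1}=E(DLE)^{-1}D$ we get $\rr=(D^*c)^*(DLE)^{-1}(Eb)$... wait, more carefully, $c^*L^{-1}b=c^*E(DLE)^{-1}Db=(E^*c)^*(DLE)^{-1}(Db)$, and $DLE$ is \strstable, so setting $\tilde L=DLE$, $\tilde c=E^*c$, $\tilde b=Db$ yields the desired form $\rr=\tilde c^*\tilde L^{-1}\tilde b$. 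Undoing the initial translation $x_j\mapsto x_j-iI$ replaces $\tilde L$ by $\tilde L(\ulx-iI)=\tilde L(0)-iI(\text{coeff})\dots$; one checks this shift preserves \strstability since it only alters the constant term, and for a \sstable block $H+iP_0+\sum P_jx_j$ the substitution $x_j\mapsto x_j-iI$ gives $(H)+i(P_0+\sum P_j)x_j^0$... i.e. the new constant term is $H+i(P_0+\sum_j P_j)$ with the same $P_j$ and hermitian part $H$, hence still \sstable (the positive semidefinite imaginary-constant part only grew, and the common kernel condition is unchanged).

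The main obstacle is the bookkeeping around the basepoint: a realization requires regularity at $0$, but $\allh$ is a translate of an open set not containing $0$, so one must (i) translate to get a realization, (ii) apply Theorem \ref{t:stable} on the translated domain, and (iii) translate back while checking that the translation $x_j\mapsto x_j\pm iI$ maps \strstable pencils to \strstable pencils — the content of (iii) is the verification above that shifting $x_j$ by $iI$ in a \sstable block keeps it \sstable. Everything else is a direct invocation of Theorem \ref{t:stable}, Remark \ref{r:facts}(3), and elementary linear algebra.
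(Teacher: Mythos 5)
Your overall strategy (translate to a basepoint, take a minimal realization, invoke Theorem \ref{t:stable}, translate back) matches the paper's, but your choice of basepoint $iI$ creates a genuine gap in step (iii), and the verification you give for that step contains a sign error. Substituting $x_j\mapsto x_j-i$ into a \sstable block $H+iP_0+\sum_{j>0} P_jx_j$ yields
$$H+i\Bigl(P_0-\sum_{j>0}P_j\Bigr)+\sum_{j>0}P_jx_j,$$
not $H+i(P_0+\sum_j P_j)$: the imaginary constant part \emph{shrinks} rather than grows, and $P_0-\sum_{j>0}P_j$ need not be positive semidefinite. The failure is not cosmetic. For instance, $L=x_1$ is \sstable, yet $L(x_1-i)=x_1-i$ vanishes at the point $X_1=iI\in\allh$ and so is not even stable; similarly $1+x_1$ is \sstable while $1+x_1-i$ vanishes at $X_1=-1+i$. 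So the backward shift $x_j\mapsto x_j-iI$ does not preserve \strstability (only the forward shift $x_j\mapsto x_j+iI$ does), and the pencil your argument produces need not be of the required form.

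The gap can be repaired in two ways. (a) As in the paper: choose a \emph{real} basepoint $\alpha\in\dom\rr\cap\R^d$, which exists because $\dom\rr\cap\C^d$ is a nonempty Zariski open set and $\R^d$ is Zariski dense in $\C^d$. Real shifts fix $\allh$ pointwise-in-imaginary-part and preserve \sstability, since $x_j\mapsto x_j\pm\alpha_j$ only changes the hermitian constant term $H$ into $H\pm\sum_j\alpha_jP_j$ and leaves the kernel condition intact; one then applies Theorem \ref{t:stable} to $L(x-\alpha)$. (b) Keeping your basepoint $iI$: observe that $\dom\rr(x+iI)$ contains the larger set $\allh-iI=\{X\colon\imag X_j\succ-I\}$, so by Remark \ref{r:facts}(3) the minimal pencil $L$ is nonsingular on all of $\allh-iI$; hence $L(x-iI)$ is nonsingular on $\allh$, i.e.\ stable, and Theorem \ref{t:stable} should be applied directly to $L(x-iI)$ rather than to $L$ followed by the (invalid) shift of the \sstable blocks. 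Your $(\Leftarrow)$ direction is fine in substance, though to pass from ``the matrix formula $c^*L^{-1}b$ is defined at $X$'' to ``$X\in\dom\rr$'' (the domain being defined through scalar rational expressions) one should route through Remark \ref{r:facts}(3) after a real shift, as the paper does.
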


\begin{proof}
$(\Leftarrow)$ Let $\rr=c^* L^{-1}b$ for a stable pencil $L$. The matrix $L(\beta)$ is invertible for every $\beta$ in the positive orthant of $\C^d$ by stability of $L$, so the complex polynomial $\det L(z)$ in $d$ commuting variables is nonzero. Hence there exists $\alpha\in\R^d$ such that $\det L(\alpha)\neq0$. Then $0\in\dom\rr(x+\alpha)$ and
$$\rr(x+\alpha)=
c^*\big(L(x+\alpha)\big)^{-1}b=
c^*\big(I+L(\alpha)^{-1}(L-I) \big)^{-1}L(\alpha)^{-1}b$$
is a realization of $\rr(x+\alpha)$. By Remark \ref{r:facts}(3) and stability of $L$ we have
$$
\dom\rr(x+\alpha)=
\big\{X\in \all\colon \det L(X+\alpha I)\neq0\big\}\supseteq\allh,
$$
and consequently $\dom\rr\supset\allh$ since $\alpha\in\R^d$.

$(\Rightarrow)$ Let $\dom\rr\supset\allh$. Since $\dom\rr\cap \C^d$ is a nonempty Zariski open set and $\R^d$ is a Zariski dense set in $\C^d$, there exists $\alpha\in\dom\rr\cap\R^d$. Note that $\rr(x+\alpha)$ again satisfies $\dom\rr(x+\alpha)\supset\allh$. If $c^*L^{-1}b$ is a minimal realization of $\rr(x+\alpha)$, then $L$ is stable by Remark \ref{r:facts}(3). Hence $L(x-\alpha)$ is stable and thus \strstable by Theorem \ref{t:stable} and $\rr=c^*L(x-\alpha)^{-1}b$.
\end{proof}

For later use we record two well-known determinantal identities.

\begin{lem}\label{l:wk}
Let $P\in\GL_{\de}(\C)$ and $u,v\in\C^{\de\times \ve}$. Then
$$\det(P+uv^*)=\det(I+v^*P^{-1}u)\det P,\qquad
\det\begin{pmatrix}0 & u^* \\ u & P\end{pmatrix}=\det(-u^*P^{-1}u)\det P.$$
\end{lem}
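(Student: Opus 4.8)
The statement to prove is Lemma~\ref{l:wk}, consisting of two classical determinantal identities: the matrix determinant lemma $\det(P+uv^*)=\det(I+v^*P^{-1}u)\det P$ and the Schur-complement identity $\det\left(\begin{smallmatrix}0 & u^* \\ u & P\end{smallmatrix}\right)=\det(-u^*P^{-1}u)\det P$.

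\begin{proof}
Both identities follow from the standard Schur complement factorization. For the first, factor
$$
\begin{pmatrix} P & u \\ -v^* & I \end{pmatrix}
=
\begin{pmatrix} I & 0 \\ -v^*P^{-1} & I \end{pmatrix}
\begin{pmatrix} P & u \\ 0 & I+v^*P^{-1}u \end{pmatrix},
$$
so taking determinants gives $\det P\cdot\det(I+v^*P^{-1}u)$ on the one hand. On the other hand, factoring the same block matrix as
$$
\begin{pmatrix} P & u \\ -v^* & I \end{pmatrix}
=
\begin{pmatrix} I & u \\ 0 & I \end{pmatrix}
\begin{pmatrix} P+uv^* & 0 \\ -v^* & I \end{pmatrix}
$$
yields $\det(P+uv^*)$. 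Equating the two expressions proves the first identity.

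For the second identity, since $P\in\GL_\de(\C)$ we have the factorization
$$
\begin{pmatrix} 0 & u^* \\ u & P \end{pmatrix}
=
\begin{pmatrix} I & u^*P^{-1} \\ 0 & I \end{pmatrix}
\begin{pmatrix} -u^*P^{-1}u & 0 \\ u & P \end{pmatrix},
$$
whose determinant is $\det(-u^*P^{-1}u)\det P$.
\end{proof}

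Forward-looking remark on how I arrived at this: the plan is simply to reduce both identities to block-triangular factorizations of a suitably chosen $2\times2$ block matrix, so that the determinant can be read off in two different ways. The only subtlety to watch is that $P$ is invertible (which is given), so the Schur complement is well-defined; there is no real obstacle here since these are textbook facts, and the proof is essentially a bookkeeping exercise in block matrix multiplication. I would not expect to need Cohn's skew field machinery or any of the earlier structural results — the lemma is purely linear-algebraic and self-contained.
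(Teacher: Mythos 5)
Your proof is correct: both block factorizations multiply out as claimed, the dimensions are consistent (the Schur complements $I+v^*P^{-1}u$ and $-u^*P^{-1}u$ are $\ve\times\ve$), and reading off the determinant of each block-triangular factor gives exactly the two stated identities. The paper records this lemma as a pair of well-known facts without proof, so there is nothing to compare against; your Schur-complement argument is the standard derivation and is entirely adequate.
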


Let $\rr\in\rx$ and $0\in\dom\rr$. We say that $\rr$ is {\bf indecomposable} \cite[Section 4.2]{KV} if the pencil appearing in its minimal realization is \irr. We record the following property of hermitian indecomposable functions.

\begin{prop}\label{p:irrrat}
Let $\rr\in\rx$ be hermitian and indecomposable. If $\dom\rr\supset\allh$, then $\dom\rr^{-1}\supset\allh$.
\end{prop}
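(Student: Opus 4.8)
The plan is to build, from a hermitian minimal realization of $\rr$, a \sstable pencil of size one larger whose singularity locus records the zeros of $\rr$, and then to invoke Proposition \ref{p:good}. First, use Remark \ref{r:facts}(4) to fix a \emph{hermitian} minimal realization $\rr=c^*L^{-1}c$ with $L=H_0+\sum_{j>0}H_jx_j$, $H_j\in\herm\de$; since $\rr$ is indecomposable the pencil $L$ is \irr, and since $0\in\dom\rr$, Remark \ref{r:facts}(3) gives $H_0=L(0)\in\gl\de$. The hypothesis $\dom\rr\supset\allh$ together with Remark \ref{r:facts}(3) says precisely that $L$ is stable, so Proposition \ref{p:herm}(2) applies: $L$ or $-L$ is \sstable, and up to replacing $\rr$ by $-\rr$ (which leaves all hypotheses and $\dom\rr^{-1}$ unchanged, as $-\rr=c^*(-L)^{-1}c$ is again a hermitian minimal realization with \irr pencil and $(-\rr)^{-1}=-\rr^{-1}$) I may assume $L$ itself is \sstable; being hermitian too, it has the form $L=H_0+\sum_{j>0}P_jx_j$ with $P_j:=H_j\succeq0$ for $j>0$, the $iP_0$ summand of Definition \ref{d:strong} being forced to vanish by hermiticity of $H_0$.

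Next I would introduce the bordered pencil
$$\tilde L=\begin{pmatrix}0 & c^* \\ c & H_0\end{pmatrix}+\sum_{j>0}\begin{pmatrix}0 & 0 \\ 0 & P_j\end{pmatrix}x_j$$
of size $\de+1$ and prove it is \sstable. Its constant term is hermitian and its linear coefficients are positive semidefinite, so everything reduces to the kernel condition of Definition \ref{d:strong}. A vector $(t,v)$ with $t\in\C$, $v\in\C^\de$ lies in $\ker\begin{pmatrix}0&c^*\\c&H_0\end{pmatrix}\cap\bigcap_{j>0}\ker\begin{pmatrix}0&0\\0&P_j\end{pmatrix}$ iff $c^*v=0$, $tc+H_0v=0$ and $P_jv=0$ for all $j>0$. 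As $H_0\in\gl\de$, the middle equation gives $v=-tH_0^{-1}c$, hence $0=c^*v=-t\,\rr(0)$ and $t\,P_jH_0^{-1}c=0$ for every $j>0$. Suppose $t\neq0$; then $\rr(0)=0$ and $P_jH_0^{-1}c=0$ for all $j>0$. Writing $L=H_0(I+N)$ with $N=\sum_{j>0}(H_0^{-1}P_j)x_j$, we get $NH_0^{-1}c=\sum_{j>0}(H_0^{-1}P_jH_0^{-1}c)x_j=0$, so $(I+N)H_0^{-1}c=H_0^{-1}c$ and therefore $\rr=c^*(I+N)^{-1}H_0^{-1}c=c^*H_0^{-1}c=\rr(0)=0$, which is impossible because an indecomposable rational function is nonzero. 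Hence $t=0$, so $v=0$, and $\tilde L$ is \sstable.

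Finally, Proposition \ref{p:good} shows $\tilde L$ is stable, so $\det\tilde L(X)\neq0$ for every $X\in\allh$. For such $X$ the matrix $L(X)$ is invertible (stability of $L$), and the second identity of Lemma \ref{l:wk}, applied with $P=L(X)$ and $u=c\otimes I$, yields $\det\tilde L(X)=\det(-\rr(X))\det L(X)$; since the left side is nonzero and $\det L(X)\neq0$, we conclude $\det\rr(X)\neq0$, i.e.\ $\rr(X)$ is invertible and $X\in\dom\rr^{-1}$. As $X\in\allh$ was arbitrary, $\dom\rr^{-1}\supset\allh$.

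The step I expect to be the crux, and the one I would scrutinize most carefully, is the kernel verification for $\tilde L$ --- in particular ruling out $t\neq0$. That is exactly where indecomposability of $\rr$ is used (to guarantee $\rr\neq0$) and where the hermitian shape of $L$ is essential (so that bordering keeps the pencil \sstable rather than merely \strstable). The remainder is bookkeeping with the realization dictionary of Remark \ref{r:facts} and the already-established Propositions \ref{p:good} and \ref{p:herm}.
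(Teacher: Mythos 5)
Your proof is correct and follows essentially the same route as the paper: a hermitian minimal realization $c^*L^{-1}c$, Proposition \ref{p:herm}(2) to make $L$ (or $-L$) \sstable, the bordered pencil $\bigl(\begin{smallmatrix}0 & c^* \\ c & L\end{smallmatrix}\bigr)$, and the second identity of Lemma \ref{l:wk}. The only difference is that you explicitly verify the kernel condition making the bordered pencil \sstable (ruling out $t\neq0$ via $\rr\neq0$), a point the paper asserts without detail, so your write-up is if anything more complete.
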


\begin{proof}
By the assumption and Remark \ref{r:facts}(4), $\rr$ admits a minimal realization $c^*L^{-1}c$ with $L$ hermitian and \irr. Since $\dom\rr\supset\allh$, the proof of $(\Rightarrow)$ in Theorem \ref{t:rat} shows that $L$ is stable, so $L$ or $-L$ is \sstable by Proposition \ref{p:herm}. By Lemma \ref{l:wk} we have
\begin{equation}\label{e:16}
\det\left(-(c^*L^{-1}c)(X)\right)\det L(X) =
\det\left(\begin{pmatrix}0 & c^* \\ c & L\end{pmatrix}(X)\right).
\end{equation}
Note that  $L$ and $(\begin{smallmatrix}0 & c^* \\ c & L\end{smallmatrix})$ or their negatives are \sstable pencils, so
$\det\rr(X)\neq0$ for all $X\in\allh$. Therefore $\dom\rr^{-1}\supset\allh$.
\end{proof}

\begin{rem}
The conclusion of Proposition \ref{p:irrrat}(1) fails in general if $\rr$ is not hermitian and indecomposable; for example, consider $\rr=x_1-i$ and $\rr=1+x_1^2$.
\end{rem}

\subsection{Stable noncommutative polynomials}

We say that $f\in\px$ is {\bf stable} if $f(X)$ is invertible for every $f\in\allh$. That is, $f$ is stable if and only if $\dom f^{-1}\supset\allh$. In this subsection we prove that every stable noncommutative polynomial admits a determinantal representation with a \sstable pencil, see Theorem \ref{t:irr}. Then we turn our attention to hermitian stable polynomials, which are noncommutative analogs of real stable polynomials. Quite contrary to the commutative setting, we show that every irreducible hermitian stable polynomial is affine (Theorem \ref{t:aff}). Here $f\in\px$ is {\bf irreducible} if it cannot be written as $f=f_1f_2$ for some $f_1,f_2\in\px\setminus\C$. 

The following lemma is a descriptor realization analog of \cite[Lemma 5.3]{HKV} (which deals with Fornasini-Marchesini realizations).

\begin{lem}\label{l:FM}
Let $f\in\px$ and $f(0)=1$. If $c^*L^{-1}b$ is a minimal realization of $f^{-1}$ with $L=I-\sum_jA_jx_j$ of size $\de$, then
\begin{enumerate}
\item $f$ admits a minimal realization
\begin{equation}\label{e:inv}
\begin{pmatrix}-c^* & 1\end{pmatrix}
\begin{pmatrix}I-\sum_j A_j(I-bc^*) x_j & -\sum_j (A_jb) x_j \\ 0 & 1\end{pmatrix}^{-1}
\begin{pmatrix}0 \\ 1\end{pmatrix}
\end{equation}
of size $\delta+1$,
\item $\sum_j \ran A_j=\C^\de$ and $\bigcap_j \ker A_j=\{0\}$,
\item $\det f(X)=\det L(X)$ for all $X\in\all$,
\item $L$ is \irr if $f$ is irreducible.
\end{enumerate}
\end{lem}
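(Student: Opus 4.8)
I would prove the four parts in the order (1) (the realization identity), (2), (1) (minimality of \eqref{e:inv}), (3), (4); note that the minimality assertion in (1) turns out to be \emph{equivalent} to (2). For the realization I verify \eqref{e:inv} by a direct computation. Put $A=\sum_jA_jx_j$, so $L=I-A$ and $f^{-1}=c^*(I-A)^{-1}b$, and write $\tilde L$ for the pencil in \eqref{e:inv}; it is block upper triangular with diagonal blocks $M=I-A(I-bc^*)$ and $1$ and off-diagonal block $-Ab$, so block inversion gives $\begin{pmatrix}-c^* & 1\end{pmatrix}\tilde L^{-1}\begin{pmatrix}0\\ 1\end{pmatrix}=1-c^*M^{-1}Ab$. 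Since $M=(I-A)+(Ab)c^*$ is a rank-one perturbation of $I-A$, the Sherman--Morrison identity yields $M^{-1}(Ab)=(I-A)^{-1}(Ab)\,(1+h)^{-1}$ with $h=c^*(I-A)^{-1}Ab=c^*A(I-A)^{-1}b$, hence $1-c^*M^{-1}Ab=(1+h)^{-1}$; and $f^{-1}=c^*b+c^*A(I-A)^{-1}b=1+h$ because $c^*b=f^{-1}(0)=1$, so the expression equals $(f^{-1})^{-1}=f$. This step is routine.

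The crux is (2). Minimality of $c^*L^{-1}b$ means it is controllable and observable (Remark \ref{r:facts}(1)), so $\spa\{A_wb:\text{words }w\}=\C^\de=\spa\{A^*_wc:\text{words }w\}$; consequently $\C^\de=\C b+\sum_j\ran A_j$ and $\C^\de=\C c+\sum_j\ran A^*_j$, so each of $\sum_j\ran A_j$ and $\sum_j\ran A^*_j$ has codimension at most $1$. To remove the gap I use $f\in\px$: set $m=\deg f$ (so $m\ge1$, the constant case being trivial) and compare, for each nonempty word $w$, the coefficients on the two sides of $f^{-1}f=1$, where $\langle f,v\rangle$ denotes the coefficient of the word $v$ in $f$:
\[
c^*A_wb=-\sum_{\substack{w=uv\\ 1\le |v|\le m}}\langle f,v\rangle\, c^*A_ub.
\]
Apply this with $w=pw_0$ for an arbitrary word $p$ and a fixed word $w_0$ of length $m$: every suffix of $pw_0$ of length $\le m$ is a suffix of $w_0$, so the identity factors as $c^*A_p\bigl(A_{w_0}b+\sum_{w_0=uv,\,1\le|v|\le m}\langle f,v\rangle A_ub\bigr)=0$ for all $p$, and observability forces the bracketed vector to vanish. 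Separating the term $v=w_0$ gives $\langle f,w_0\rangle\,b=-A_{w_0}b-\sum_{w_0=uv,\,1\le|v|\le m-1}\langle f,v\rangle A_ub$, whose right-hand side lies in $\sum_j\ran A_j$; choosing $w_0$ of length $m$ with $\langle f,w_0\rangle\neq0$ (possible since $\deg f=m$) yields $b\in\sum_j\ran A_j$, so $\sum_j\ran A_j=\C^\de$. The mirror computation, starting from $ff^{-1}=1$ and interchanging the roles of controllability and observability, gives $\sum_j\ran A^*_j=\C^\de$, i.e.\ $\bigcap_j\ker A_j=\{0\}$. I expect the prefix/suffix bookkeeping in this recurrence argument — in particular the step that strips off $c^*A_p$ — to be the only genuinely delicate point.

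Feeding $\sum_j\ran A_j=\C^\de$ and $\bigcap_j\ker A_j=\{0\}$ into a direct computation of the controllability and observability spaces of $\tilde L$ shows that \eqref{e:inv} is minimal, finishing (1). Part (3) is then a determinant identity: by (1), $\tilde L$ is the pencil of a minimal realization of $f$, and $f\in\px$ gives $\dom f=\all$, so by Remark \ref{r:facts}(3) $\det\tilde L(X)\neq0$ for every $X$; since $\det\tilde L=\det M$ (block triangularity) and $\det\tilde L(0)=1$, the polynomial function $X\mapsto\det M(X)$ is identically $1$. But $M=L+\bigl((I-L)b\bigr)c^*$, so Lemma \ref{l:wk} gives, on the Zariski-dense set where $\det f(X)\neq0$,
\[
\det M(X)=\det\bigl(I+(c^*\otimes I)L(X)^{-1}(I-L(X))(b\otimes I)\bigr)\det L(X)=\det\bigl(f(X)^{-1}\bigr)\det L(X)=\frac{\det L(X)}{\det f(X)},
\]
and comparing with $\det M\equiv1$ gives $\det f(X)=\det L(X)$ for all $X\in\all$.

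For (4): by (3) the free locus $\{X:\det L(X)=0\}$ equals $\{X:\det f(X)=0\}$. Since $f$ is irreducible it is an atom, so this locus is irreducible \cite{KV}; and by \cite{KV} the minimal realization pencil $L$ is indecomposable if and only if its free locus is irreducible. Hence $L$ is indecomposable, as claimed — the correspondence between (ir)reducibility of an nc polynomial and (in)decomposability of the pencil of its minimal realization being exactly what the free-locus machinery supplies.
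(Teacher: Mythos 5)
Your argument is correct in all essentials but takes a genuinely different route from the paper at the central step, namely the minimality of \eqref{e:inv}. The paper never computes Hankel-type identities: it first shows that for a \emph{polynomial} $\rr$ the generic inversion construction can be compressed, so that a minimal realization of $\rr$ of size $\de$ yields a realization of $\rr^{-1}$ of size $\de-1$ (using joint nilpotency of the coefficients to produce the two special vectors $v_1,v_2$ along which one cuts down). A size count then forces the minimal size of $f$ to be exactly $\de+1$, so \eqref{e:inv} is minimal for free, and (2) drops out afterwards from Remark \ref{r:facts}(1) applied to \eqref{e:inv}. You instead prove (2) first, by comparing coefficients in $f^{-1}f=ff^{-1}=1$ and using controllability/observability of the realization of $f^{-1}$, and then verify controllability and observability of \eqref{e:inv} by hand. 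Your coefficient argument is sound (the prefix/suffix bookkeeping checks out, and the step stripping off $c^*A_p$ is exactly where observability is used), and it is arguably more self-contained; the paper's route is shorter and yields the extra fact that inverting a polynomial drops the minimal realization size by one.

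One imprecision worth flagging: your parenthetical claim that minimality of \eqref{e:inv} is \emph{equivalent} to (2) is not right as stated. Condition (2) is equivalent only to the one-step necessary conditions of Remark \ref{r:facts}(1) for the realization \eqref{e:inv} (a vector $(v_1,v_2)$ killed by $\tilde c^*$ and all $\tilde A_j$ reduces to $v_1\in\bigcap_j\ker A_j$, and dually), and those conditions do not in general imply minimality. The full controllability space of \eqref{e:inv} is $\C\tilde b+\spa\{B_wA_jb\}\oplus 0$ with $B_j=A_j(I-bc^*)$, and to see that $\spa\{B_wA_jb\}=\C^\de$ you need, beyond (2), both the controllability of the original realization and your displayed identity placing $b$ in $\spa\{A_ub:u\neq\emptyset\}$; dually, observability of \eqref{e:inv} reduces via $\spa\{c^*B_w\}=\C^{1\times\de}$ to $\bigcap_j\ker A_j=\{0\}$. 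All of these ingredients are present in your write-up, so the computation does close, but it should be carried out rather than asserted. (Also, for the constant polynomial $f=1$ part (2) actually fails rather than being ``trivial''; this edge case is harmless for the intended application to irreducible $f$, and the paper glosses over it too.) Parts (3) and (4) match the paper's proof up to cosmetic differences: the paper gets $\det M\equiv 1$ from joint nilpotency of the $A_j(I-bc^*)$ rather than from nonvanishing of a polynomial, and in (4) it is careful to invoke (2) when passing from irreducibility of $\det L$ at large matrix sizes to indecomposability of $L$ — a caveat you should also make explicit when citing the free-locus correspondence.
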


\begin{proof}
Let $\rr\in\rx$ satisfy $0\in\dom\rr$ and $\rr(0)=1$. If $c^*(I-\sum_jA_jx_j)^{-1}b$ is a realization of $\rr$ of size $\de$, then \eqref{e:inv} is a realization of $\rr^{-1}$ of size $\de+1$, see e.g. \cite[Theorem 3.10]{Vol}. In particular, if $\rr$ admits a minimal realization of size $\de$, then $\rr^{-1}$ admits a minimal realization of size at least $\de-1$. Now assume $\rr\in\px$ and let $c^*(I-\sum_jA_jx_j)^{-1}b$ be its minimal realization. Then $A_1,\dots,A_d$ are jointly nilpotent matrices by Remark \ref{r:facts}(3). Hence there exists $v_1\neq0$ such that $v_1^*A_j=0$ for all $j$. Moreover, joint nilpotency implies $\bigcap_j\ker A_j\neq\{0\}$, and by Remark \ref{r:facts}(1) there exists $v_2\in\bigcap_j\ker A_j$ such that $c^*v_2=1$. Let $V\in\C^{(\de+1)\times (\de-1)}$ be a matrix whose columns form an orthonormal basis of the orthogonal complement of $\{ (\begin{smallmatrix} v_1 \\ 0 \end{smallmatrix}),(\begin{smallmatrix} v_2 \\ 1 \end{smallmatrix}) \}$ in $\C^{\de+1}$. Combining
$$
\begin{pmatrix}v_1^* & 0\end{pmatrix}\begin{pmatrix} 0 \\ 1 \end{pmatrix}=0,\quad
\begin{pmatrix}v_1^* & 0\end{pmatrix}\begin{pmatrix}A_j(I-bc^*) & A_jb \\ 0 & 0\end{pmatrix}=0
$$
and
$$
\begin{pmatrix}-c^* & 1\end{pmatrix}\begin{pmatrix} v_2 \\ 1 \end{pmatrix}=0,\quad
\begin{pmatrix}A_j(I-bc^*) & A_jb \\ 0 & 0\end{pmatrix}\begin{pmatrix} v_2 \\ 1 \end{pmatrix}=0
$$
with \eqref{e:inv} it is easy to see that $\rr^{-1}$ admits a realization
$$
\left(\begin{pmatrix}-c^* & 1\end{pmatrix}V\right)
\left(V^*
\begin{pmatrix}I-\sum_j A_j(I-bc^*) x_j & -\sum_j (A_jb) x_j \\ 0 & 1\end{pmatrix}
V\right)^{-1}
\left(V^*\begin{pmatrix}0 \\ 1\end{pmatrix}\right)
$$
of size $\de-1$.

(1)	Now fix $f\in\px$ with $f(0)=1$, and let $c^*L^{-1}b$ be a minimal realization of $f^{-1}$ with $L=I-\sum_jA_j x_j$. If $f$ admitted a minimal realization of size at most $\de$, then $f^{-1}$ would admit a minimal realization of size at most $\de-1$ by the previous paragraph, which contradicts the assumption on size of $c^*L^{-1}b$. Therefore $f$ admits a minimal realization of size $\de+1$ of the form \eqref{e:inv}.

(2) We have just seen that \eqref{e:inv} is a minimal realization of $f$. If $v\in\C^d$ is such that $v^*A_j=0$ for all $j$, then
$$
\begin{pmatrix}v^* & 0\end{pmatrix}\begin{pmatrix} 0 \\ 1 \end{pmatrix}=0,\quad
\begin{pmatrix}v^* & 0\end{pmatrix}\begin{pmatrix}A_j(I-bc^*) & A_jb \\ 0 & 0\end{pmatrix}=0,
$$
so Remark \ref{r:facts}(1) implies $v=0$, and hence $\sum_j\ran A_j=\C^\de$. Similarly we obtain $\bigcap_j\ker A_j=\{0\}$. 

(3) Matrices $A_j(I-bc^*)$, which by \eqref{e:inv} appear in a minimal realization of $f$, are jointly nilpotent by Remark \ref{r:facts}(2). Next, $c^*b=f(0)^{-1}=1$ implies
$$f^{-1}=c^* L^{-1}b=1+c^*L^{-1}(I-L)b.$$
By Lemma \ref{l:wk} we then have
$$\det L(X)\cdot \det f(X)^{-1}
=\det\big( (L+(I-L)bc^*)(X) \big)
$$
for every $X\in\dom f^{-1}$. But
$$\det\big( (L+(I-L)bc^*)(X) \big)=
\det\left(I\otimes I-\sum_jA_j(I-bc^*)\otimes X_j\right)
=1$$
since $A_j(I-bc^*)$ are jointly nilpotent, so $\det f(X)=\det L(X)$ for all $X\in\all$.

(4) Assume $f$ is irreducible. For $X^{(n)}\in\mat{n}^g$ one can view $\det f(X^{(n)})$ as a polynomial in $gn^2$ variables, By \cite[Theorem 4.3]{HKV}, there exists $n_0\in\N$ such that $\det L(X^{(n_0)})=\det f(X^{(n_0)})$ is an irreducible polynomial for all $n\ge n_0$. Using the results of \cite[Subsection 2.1]{HKV} it is easy to see that $L$ is \irr or $\sum_j\ran A_j\neq\C^\de$ or $\bigcap_j\ker A_j\neq\{0\}$. Therefore $L$ is \irr by (2).
\end{proof}

\begin{thm}\label{t:irr}
Let $f\in\px$. Then $f$ is stable if and only if there exists a \sstable pencil $L$ such that $\det f(X)= \det L(X)$ for all $X\in\all$.
\end{thm}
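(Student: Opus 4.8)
The plan is to prove Theorem \ref{t:irr} by reducing to the case where $f$ is irreducible and normalized so that $f(0)=1$, and then applying Lemma \ref{l:FM} together with Theorem \ref{t:stable} and Lemma \ref{l:irr}. First I would dispose of the easy direction: if $\det f(X)=\det L(X)$ for all $X$ with $L$ a \sstable pencil, then $\det f(X)\neq0$ for every $X\in\allh$ by Proposition \ref{p:good}, so $f$ is stable. For the converse, suppose $f$ is stable. Factor $f$ into irreducible polynomials $f=\gamma f_1\cdots f_m$ with $\gamma\in\C\setminus\{0\}$; since each $f_k(X)$ divides $\det f(X)$ up to the nonzero scalar structure, each factor $f_k$ is again stable (a product of matrices is invertible iff each factor is). It therefore suffices to produce a \sstable pencil for each irreducible stable factor, because a block-diagonal stack of \sstable pencils is again \sstable (the kernel condition is preserved under direct sums), and determinants multiply; absorbing the constant $\gamma$ into one block via scaling is harmless.

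So assume $f$ is irreducible and stable. Up to translating by a point of $\R^d$ (which preserves stability, exactly as in the proof of Theorem \ref{t:rat}, since the matricial positive orthant is invariant under real shifts) and rescaling, I may assume $f(0)=1$. Since $f$ is stable, $f^{-1}=c^*L^{-1}b$ admits a minimal realization with $L=I-\sum_j A_jx_j$, and $\dom f^{-1}\supset\allh$ gives, via Remark \ref{r:facts}(3), that $L$ is stable. Now apply Lemma \ref{l:FM}: part (3) gives $\det f(X)=\det L(X)$ for all $X\in\all$, and part (4) gives that $L$ is \irr because $f$ is irreducible. By Theorem \ref{t:stable}, $L$ is \strstable, and since $L$ is \irr, Lemma \ref{l:irr} yields $DL=L_1$ for some $D\in\gl{\de}$ and a \sstable pencil $L_1$. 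Then $\det L_1(X)=\det D\cdot\det L(X)=\det D\cdot\det f(X)$ for all $X$. Finally, rescale: replacing $L_1$ by $(\det D)^{-1/\delta}L_1$ — or more cleanly, by $D'L_1$ with $D'$ a suitable scalar matrix chosen so that the determinant of the constant term matches — produces a \sstable pencil whose determinant equals $\det f(X)$ identically. One must check that multiplying a \sstable pencil by a nonzero scalar (possibly complex, of modulus one after absorbing the phase appropriately, or by a positive real) keeps it \sstable: scaling $H\mapsto tH$, $P_j\mapsto tP_j$ by $t>0$ preserves hermiticity, positive semidefiniteness, and the kernel condition, and scaling by a unimodular complex number can be handled by the observation in Remark \ref{r:def} that left multiplication by $(E^{-1})^*\cdot E^{-1}$-type factors preserves \sstable-ness; the determinant then matches exactly.

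The main obstacle I anticipate is bookkeeping around the determinant normalization and the irreducible factorization: the theorem is stated as an exact identity $\det f(X)=\det L(X)$, not merely up to a nonzero scalar, so one has to be careful that the translations, the rescaling to make $f(0)=1$, the passage through Lemma \ref{l:irr} (which introduces the unknown factor $\det D$), and the recombination of the factors $f_1,\dots,f_m$ all compose to give the constant $1$ in front, not some other scalar. Concretely, after getting \sstable pencils $L^{(k)}$ with $\det L^{(k)}(X)=c_k\det f_k(X)$, I would rescale each $L^{(k)}$ by a positive real so that $c_k$ becomes $1$ — this is possible whenever $c_k>0$, and one can always arrange $c_k>0$ by the freedom in choosing $D$ in Lemma \ref{l:irr} up to a positive scalar, or by replacing $L^{(k)}$ with $(E^{-1})^*L^{(k)}E^{-1}$ for suitable $E$. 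The translation issue is the other subtle point: $L(x-\alpha)$ for $\alpha\in\R^d$ is stable whenever $L$ is, and its constant term is $L(-\alpha I)$ which need not be \sstable even if $L(0)$ is; but this is fine because I only need $L(x-\alpha)$ to be \emph{stable}, hence \strstable, and then re-run Lemma \ref{l:irr} on the translated pencil. The remaining verifications — that stability is preserved under real translation, that \sstable-ness survives scaling, that direct sums of \sstable pencils are \sstable — are all routine and follow the patterns already established in the proofs of Propositions \ref{p:good} and \ref{p:herm}.
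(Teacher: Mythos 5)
Your architecture coincides with the paper's: reduce to an irreducible $f$ with $f(0)\neq0$, take a minimal realization of $f^{-1}$, use Lemma \ref{l:FM}(3)--(4) for the determinantal identity and indecomposability of the pencil, Theorem \ref{t:rat} for its stability, and Lemma \ref{l:irr} to get a single \sstable block $DL$. You also correctly flag the one nontrivial residual issue, the multiplicative constant --- but your resolution of it does not work, and this is a genuine gap rather than bookkeeping. First, for $X\in\mat{n}^d$ one has $\det\big((DL)(X)\big)=(\det D)^n\det L(X)$, not $\det D\cdot\det L(X)$: the unwanted factor depends on the size $n$ of the test tuple. Your rescaling $L_1\mapsto(\det D)^{-1/\de}L_1$ does contribute the correct compensating factor $(\det D)^{-n}$ at every level, so it is arithmetically fine, but it destroys pure stability whenever $(\det D)^{-1/\de}\notin\R_{>0}$, since multiplying a \sstable pencil by a scalar that is not a positive real ruins positive semidefiniteness of the coefficients. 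The bad case genuinely occurs: for $f=ix_1-1$ (irreducible, stable, $f(0)=-1$, $\tilde{L}=1-ix_1$) the only scalars $D$ making $f(0)D\tilde{L}$ \sstable are $D=-it$ with $t>0$, so $\det D\in -i\R_{>0}$. Neither of your proposed escapes can repair the phase: replacing $D$ by $tD$ with $t>0$ multiplies $\det D$ by $t^{\de}>0$, and replacing $L_1$ by $(E^{-1})^*L_1E^{-1}$ multiplies the determinant by $|\det E|^{-2}>0$; both leave $\arg(\det D)$ untouched, so the claim that ``one can always arrange $c_k>0$'' is unsupported and in fact false. The same obstruction reappears for the constants $\gamma$ and $f(\alpha)$ that your factorization and normalization steps introduce.

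The device the paper uses, and the one idea your proposal is missing, is to pay for the phase with extra size rather than with scaling: append a $2\times2$ constant block $\mu I_2$, where $\mu^2=(\det D)^{-1}$ and $\mu$ is chosen with $\imag\mu\ge0$. This block is \sstable (its real part is a real scalar matrix, its imaginary part is $(\imag\mu)I_2\succeq0$, and it is invertible), a direct sum of \sstable pencils is \sstable, and at level $n$ the block contributes exactly $\mu^{2n}=(\det D)^{-n}$, cancelling the unwanted factor on the nose. With this insertion (applied likewise to the scalars from the irreducible factorization) your argument closes. Two smaller remarks: your worry about the real translation is unfounded --- shifting a \sstable pencil along $\R^d$ replaces $H$ by $H-\sum_j\alpha_jP_j$ and changes nothing else, so it stays \sstable; in particular you should not ``re-run Lemma \ref{l:irr} on the translated pencil,'' as that would introduce a fresh unknown determinant factor and undo your normalization.
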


\begin{proof}
The implication $(\Leftarrow)$ trivially holds, so we consider $(\Rightarrow)$.
Since \sstable pencils are preserved under shifts along $\R^d$ and direct sums, it suffices to assume that $f$ is irreducible and $f(0)\neq0$. Let $f^{-1}=c^* \tilde{L}^{-1}b$ be a minimal realization. The monic pencil $\tilde{L}$ is \irr and
$$\det f(X)=\det \big(f(0)\tilde{L}(X)\big)$$
for all $X\in\all$ by Lemma \ref{l:FM}(3)\&(4). If $f$ is stable, then $\tilde{L}$ is \strstable by Theorem \ref{t:rat}. By Lemma \ref{l:irr} there exists an invertible $D$ such that $f(0)D\tilde{L}$ is \sstable. If $\det D\in\R_{>0}$, then take $L:=(\det D)^{-1}f(0) D\tilde{L}$. Otherwise there is $\gamma\in\R+i\R_{>0}$ such that $\det D=\gamma^2$. Then $L:=(\gamma^{-1} I_2)\oplus (f(0)D\tilde{L})$ is the desired \sstable pencil.
\end{proof}

\begin{exa}\label{exa:lot}
If $\beta\in\R$, then a short calculation shows that
\begin{equation}\label{e:30}
\imag\left((T-\beta I)^{-1}\right)=-(T-\beta I)^{-1}(\imag T)(T^*-\beta I)^{-1}
\end{equation}
for all the matrices $T$ with $T-\beta I$ invertible. Now let $r\in\R(t)$ be an arbitrary univariate rational function of the form
$$r=\sum_{k=1}^\ell \frac{\alpha_k}{t-\beta_k},\qquad \alpha_k\in \R_{<0},\ \beta_k\in\R.$$
If $\imag T\succ 0$, then $(T-\beta_k I)$ is invertible for all $k$, and thus
\begin{equation}\label{e:31}
\imag T\succ 0\quad\Rightarrow \quad  \imag(r(T))\succ 0
\end{equation}
by \eqref{e:30}. Write $r=p/q$ for coprime $p,q\in\R[t]$ and let $f=p(x_1)+q(x_1)x_2\in\px$. Then
$$f(X)=p(X_1)+q(X_1)X_2=q(X_1)\left(q(X_1)^{-1}p(X_1)+X_2\right)$$
is invertible for every $X\in\mathbb{H}^2$ because $\imag(q(X_1)^{-1}p(X_1)+X_2)\succ0$ by \eqref{e:31}. Therefore $f$ is stable and irreducible. \bqed
\end{exa}

\subsubsection{Stable hermitian polynomials}

As for noncommutative rational functions, we say that $f\in\px$ {\bf hermitian} if $f(X)^*=f(X^*)$ for all $X\in\all$. Recall that there exist irreducible stable polynomials of arbitrary degree (Example \ref{exa:lot}). On the other hand, this is not true for hermitian polynomials.

\begin{thm}\label{t:aff}
Let $f\in\px$ be hermitian and irreducible, and $f(0)=1$. Then $f$ is stable if and only if $f=1\pm (\alpha_1x_1+\cdots+\alpha_dx_d)$ for $\alpha_j\in\R_{\ge0}$.
\end{thm}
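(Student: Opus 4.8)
\emph{Plan.} The idea is to convert stability, via realization theory, into a positivity statement for $f$ on $\allh$, and then to run a one-variable Pick-type argument to force $f$ to be affine. The implication $(\Leftarrow)$ is immediate: if $f=1+\sum_j\alpha_jx_j$ with all $\alpha_j\ge0$, then $f$ is itself a \sstable pencil of size $1$ (take $H=1$, $P_0=0$, $P_j=\alpha_j$), hence stable by Proposition \ref{p:good}; if $f=1-\sum_j\alpha_jx_j$ with all $\alpha_j\ge0$, apply the same to $-f$.

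For $(\Rightarrow)$, suppose $f$ is stable, i.e.\ $\dom f^{-1}\supset\allh$. Since $f$ is irreducible, $f^{-1}$ is indecomposable (Lemma \ref{l:FM}(4)), and since $f$ is hermitian, so is $f^{-1}$. As in the proof of Proposition \ref{p:irrrat}, $f^{-1}$ then admits a minimal hermitian realization $f^{-1}=c^*L^{-1}c$ with $L=H_0+\sum_{j>0}H_jx_j$, $H_j\in\herm{\de}$, such that $\varepsilon L$ is \sstable for some $\varepsilon\in\{1,-1\}$; in particular $\varepsilon H_j\succeq0$ for $j>0$. Hence for $X\in\allh$ we have $\imag(\varepsilon L(X))=\sum_{j>0}\varepsilon H_j\otimes\imag X_j\succeq0$, so the identity $\imag(T^{-1})=-T^{-1}(\imag T)(T^{-1})^*$ for invertible $T$ (cf.\ \eqref{e:30}) gives $\varepsilon\,\imag(f^{-1}(X))=c^*\imag\big((\varepsilon L(X))^{-1}\big)c\preceq0$, and applying the same identity to $f(X)=(f^{-1}(X))^{-1}$ yields $\varepsilon\,\imag(f(X))\succeq0$. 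Replacing $f$ by $-f$ if $\varepsilon=-1$ (which we undo at the end), we may thus assume $\imag f(X)\succeq0$ for all $X\in\allh$.

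The core is now the claim that \emph{any $g\in\px$ with $\imag g(X)\succeq0$ for all $X\in\allh$ is affine}. Granting this, $g=g(0)+\sum_j\gamma_jx_j$, and testing $\imag g$ at scalar tuples $X_j=(s_j+it_j)I$ with $s_j\in\R$, $t_j>0$ forces first $\imag\gamma_j=0$ and then $\gamma_j\ge0$; reinstating the sign $\varepsilon$ gives $f=1\pm\sum_j\gamma_jx_j$ with $\gamma_j\in\R_{\ge0}$, as required. To prove affineness, I would argue by contradiction: suppose $k:=\deg g\ge2$ and write $g=\sum_{m=0}^k g_m$ with $g_m$ homogeneous of degree $m$ and $g_k\ne0$. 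Fix any size $n$, a tuple $S=(S_1,\dots,S_d)$ of positive semidefinite matrices, and a tuple $R=(R_1,\dots,R_d)$ with $\imag R_j\succ0$. Since $\imag(R_j+tS_j)=\imag R_j+(\imag t)S_j\succ0$, the line $t\mapsto(R_1+tS_1,\dots,R_d+tS_d)$ lies in $\allh$ for every $t$ in the open upper half-plane, so the matrix polynomial $\phi(t):=g(R_1+tS_1,\dots,R_d+tS_d)$ satisfies $\imag\phi(t)\succeq0$ there. For every unit vector $v$, the one-variable polynomial $t\mapsto\langle\phi(t)v,v\rangle$ then maps the open upper half-plane into its closure; since a one-variable polynomial of degree $\ge2$ cannot do so (its leading term dominates at infinity and leaves the closed upper half-plane), it has degree $\le1$, whence $\deg_t\phi\le1$. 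But the coefficient of $t^k$ in $\phi$ equals $g_k(S_1,\dots,S_d)$, so $g_k(S)=0$ for every positive semidefinite tuple $S$. Using $Y_j+\lambda I\succeq0$ for $\lambda$ large, the polynomial $\lambda\mapsto g_k(Y_1+\lambda I,\dots,Y_d+\lambda I)$ vanishes identically for every hermitian tuple $Y$, so $g_k(Y)=0$ for all hermitian $Y$; finally, evaluating $g_k$ at the hermitian tuples $Z_j=(\begin{smallmatrix}0 & X_j\\ X_j^* & 0\end{smallmatrix})$ and reading off a suitable block shows that the nc $*$-polynomial obtained from $g_k$ by starring every second letter in each monomial vanishes at $(X,X^*)$ for all $X$, hence is zero, hence $g_k=0$ — contradicting $\deg g=k$. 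Therefore $\deg g\le1$.

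I expect this last step to be the main obstacle. The two delicate points are: (i) choosing the one-variable restriction so that it both stays inside $\allh$ over the entire upper half-plane \emph{and} still reaches the top homogeneous part $g_k$ — the substitution $t\mapsto R+tS$ with $\imag R_j\succ0$ and $S_j\succeq0$ is what makes this work; and (ii) bootstrapping the vanishing of $g_k$ on positive semidefinite tuples up to $g_k=0$ in $\px$, for which the hermitian $2\times2$-block embedding $X_j\mapsto(\begin{smallmatrix}0 & X_j\\ X_j^* & 0\end{smallmatrix})$ is the natural device. Assembling the preliminary part (the hermitian realization, the fact that irreducibility of $f$ forces an \irr pencil, and Proposition \ref{p:herm}(2)) into the positivity statement $\imag f(X)\succeq0$ on $\allh$ also takes some care, but relies only on results already established.
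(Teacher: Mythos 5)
Your proof is correct, and while the setup coincides with the paper's (hermitian minimal realization of $f^{-1}$ via Remark \ref{r:facts}(4), indecomposability from Lemma \ref{l:FM}(4), and Proposition \ref{p:herm}(2) to make $\pm L$ \sstable), the core of your argument is genuinely different. The paper stays entirely inside realization theory: it writes down the explicit minimal realization \eqref{e:inv} of $f$, exploits joint nilpotency of the matrices $H_jT$ together with the factorizations $P_j=V_jV_j^*$ to prove the key identity $H_jTH_k=0$, and then the Neumann series for the realization telescopes to the explicit affine formula $f=1+\sum_j(c^*H_0^{-1}H_jH_0^{-1}c)x_j$. You instead extract from the \sstable structure the positivity statement $\varepsilon\,\imag f(X)\succeq0$ on $\allh$ (via the resolvent identity \eqref{e:30} applied twice), and then prove the free analogue of the classical fact that polynomial Pick functions are affine: restricting along lines $t\mapsto R+tS$ with $\imag R_j\succ0$, $S_j\succeq0$ reduces matters to the one-variable statement that a scalar polynomial of degree $\ge2$ cannot map the upper half-plane into its closure, and the top homogeneous part is then killed by propagating its vanishing from positive semidefinite tuples to hermitian tuples to all tuples. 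All the individual steps check out, including the polarization step ($\langle\Phi_m v,v\rangle=0$ for all $v$ forces $\Phi_m=0$ over $\C$) and the final bootstrapping (for the last step one could alternatively note that vanishing on all hermitian tuples already forces vanishing on $\all$ by complexification, avoiding the $2\times2$-block embedding and the appeal to the absence of $*$-identities). Your route buys a clean, reusable lemma of independent interest --- any $g\in\px$ with $\imag g(X)\succeq0$ on $\allh$ is affine --- and makes transparent \emph{why} rigidity occurs, whereas the paper's computation is more self-contained within its realization-theoretic framework and directly exhibits the coefficients of the affine polynomial.
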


\begin{proof}
Since $(\Leftarrow)$ is clear, let us prove $(\Rightarrow)$. By Remark \ref{r:facts}(4), $f^{-1}$ admits a hermitian minimal realization $c^*L^{-1}c$
of size $\de$ with $L=H_0+\sum_{j>0} H_jx_j$. Note that
$$c^*L^{-1}c=(H_0^{-1}c)^*(LH_0^{-1})^{-1}c.$$
Then $L$ is \irr by Lemma \ref{l:FM}(4). Furthermore $L$ is stable because $f$ is stable. Moreover, since $L$ is hermitian, $L$ or $-L$ is \sstable by Proposition \ref{p:herm}(2).

By Lemma \ref{l:FM}(1),
\begin{equation}\label{e:nilp}
\begin{pmatrix}-(H_0^{-1}c)^* & 1\end{pmatrix}
\begin{pmatrix}I+\sum_j H_jH_0^{-1}\left(I-c(H_0^{-1}c)^*\right) x_j & \sum_j (H_jH_0^{-1}c) x_j \\ 0 & 1\end{pmatrix}^{-1}
\begin{pmatrix}0 \\ 1\end{pmatrix}
\end{equation}
is a minimal realization for $f$. Denote
$$T=H_0^{-1}-(H_0^{-1}c)(H_0^{-1}c)^*.$$
Therefore $H_jH_0^{-1}\left(I-c(H_0^{-1}c)^*\right)=H_jT$ are jointly nilpotent matrices by Remark \ref{r:facts}(2). Observe that $c^*H_0^{-1}c=f(0)^{-1}=1$ implies $Tc=0$. With respect to an orthonormal basis of $\C^\de$ such that $c$ is a multiple of $e_1$ we have
$$H_j=\begin{pmatrix}\alpha_j & u_j^* \\ u_j & P_j\end{pmatrix},\qquad 
T=\begin{pmatrix}0 & 0 \\ 0 & S\end{pmatrix}$$
for $P_j,S\in\herm{\de-1}$, $u_j\in\C^{\de-1}$ and $\alpha_j\in\R$. Since $L$ or $-L$ is \sstable, we can without loss of generality assume that $H_j\succeq0$ for all $j$. Therefore
$$P_j=V_jV_j^*,\quad u_j=V_jv_j$$
for some $V_j\in\mat{\de-1}$ and $v_j\in\C^{\de-1}$. Furthermore, $P_1S,\dots,P_gS$ are jointly nilpotent $(\de-1)\times(\de-1)$ matrices. Choose $1\le j,k\le d$ and denote $M=V_j^*SV_k$. Then
$$M^*(MM^*)^{\de-1}=V_k^*S (P_jSP_kS)^{\de-1}V_j=0$$
and therefore $M=0$. Consequently
\begin{equation}\label{e:nilp2}
H_jTH_k=0\qquad \forall j,k.
\end{equation}
Combining \eqref{e:nilp} and \eqref{e:nilp2} yields
\begin{align*}
f
&=\begin{pmatrix}-(H_0^{-1}c)^* & 1\end{pmatrix}
\begin{pmatrix}I+\sum_j H_jT x_j & \sum_j (H_jH_0^{-1}c) x_j \\ 0 & 1\end{pmatrix}^{-1}
\begin{pmatrix}0 \\ 1\end{pmatrix} \\
&=\begin{pmatrix}-(H_0^{-1}c)^* & 1\end{pmatrix}
\begin{pmatrix}I-\sum_j H_jT x_j & -\left(I-\sum_j H_jT x_j\right)\left(\sum_j H_jx_j\right)H_0^{-1}c \\ 0 & 1\end{pmatrix}
\begin{pmatrix}0 \\ 1\end{pmatrix} \\
&=1+c^*H_0^{-1}\Bigg(
\left(I-\sum_j H_jT x_j\right)\left(\sum_j H_jx_j\right)
\Bigg)H_0^{-1}c \\
&=1+c^*H_0^{-1}\left(\sum_j H_jx_j\right)H_0^{-1}c \\
&=1+\sum_j\left(c^*H_0^{-1}H_jH_0^{-1}c\right)x_j.
\end{align*}
Finally, since $L$ or $-L$ is \sstable, the real numbers $c^*H_0^{-1}H_jH_0^{-1}c$ have the same sign.
\end{proof}

\section{Applications}\label{sec4}

Theoretical results of previous sections can be applied to multidimensional circuits and systems \cite{Bos1,Bos2}. In engineering, one seeks a controlled system output, and is thus interested in stable systems. Given a $d$-dimensional linear time-invariant system, its stability is related to the zero locus of the denominator $f\in\C[z_1,\dots,z_d]$ of its characteristic or transfer function. In the continuous case, stability corresponds to $f$ not having zeros in the open poly-right-halfplane ($f$ is a Hurwitz polynomial), while in the discrete case stability relates to $f$ not having zeros in the open polydisk ($f$ is a Schur polynomial). For example, consider the discrete Roesser state-space model
\begin{equation}\label{e:roe}
\begin{split}
\begin{pmatrix}
x_1(i_1+1,\dots,i_d) \\ \vdots \\ x_d(i_1,\dots,i_d+1)
\end{pmatrix} &= 
A\begin{pmatrix}
x_1(i_1,\dots,i_d) \\ \vdots \\ x_d(i_1,\dots,i_d)
\end{pmatrix} +B u(i_1,\dots,i_d),\\
y_1(i_1,\dots,i_d) &= 
C\begin{pmatrix}
x_1(i_1,\dots,i_d) \\ \vdots \\ x_d(i_1,\dots,i_d)
\end{pmatrix} +D u(i_1,\dots,i_d)
\end{split}
\end{equation}
as in \cite{Bas}, where $x_j,u,y$ are the state, input and output vectors, respectively, and $A,B,C,D$ are constant matrices of appropriate sizes. Then the denominator of the transfer function for \eqref{e:roe} equals
\begin{equation}\label{e:roe1}
\det\left(
I-A\left(\bigoplus_j I_{\delta_j}z_j\right)
\right),
\end{equation}
where $\delta_j$ is the dimension of $x_j$. Hence one would like to test whether \eqref{e:roe1} is Schur. While there are certain procedures for checking the Schur or the Hurwitz property and their variations \cite{FB,RR}, they are computationally challenging since determining whether a polynomial has a zero in an open domain in $\C^d$ is a hard problem.

Hence we propose the following relaxation. Returning to the model \eqref{e:roe}, one can first ask if the pencil
\begin{equation}\label{e:roe2}
I-A\left(\bigoplus_j I_{\delta_j}x_j\right)
\end{equation}
is Schur stable as in Subsection \ref{ss:hs}. This can be done efficiently using the algorithm from Subsection \ref{ss:algo}. If \eqref{e:roe2} is a Schur stable pencil, then \eqref{e:roe1} is a Schur polynomial. While the converse fails in general, this relaxation is reasonable when it can be hypothesized that the stability of \eqref{e:roe} strongly depends on a specific structure of the matrix $A$. Namely, the algorithm from Subsection \ref{ss:algo} affirms or dismisses this hypothesis.

Such relaxations are also applicable to other models of multidimensional linear systems $\cS$, whose stability can be translated into the Hurwitz/Schur property of the determinant of the pencil arising from matrices in $\cS$. Furthermore, the development of the noncommutative linear systems theory \cite{BGM,BGM1,BGM2} might provide even more direct applications of the results in this paper.


\end{document}